\numberwithin{equation}{section} 
\newtheorem{theorem}{Theorem}[section]
\newtheorem{lemma}[theorem]{Lemma}
\newtheorem{corollary}[theorem]{Corollary}
\theoremstyle{definition}
\newtheorem{definition}[theorem]{Definition}
\newtheorem{example}[theorem]{Example}
\newtheorem*{ackno}{Acknowledgements}
\theoremstyle{remark}
\newtheorem{remark}[theorem]{Remark}
\newcommand{\sA}{\mathcal{A}}
\newcommand{\sE}{\mathcal{E}}
\newcommand{\sJ}{\mathcal{J}}
\newcommand{\sM}{\mathcal{M}}
\newcommand{\Oh}{\mathcal{O}}
\newcommand{\sF}{\mathcal{F}}
\newcommand{\sU}{\mathcal{U}}
\newcommand{\K}{\mathbb{K}}
\newcommand{\Z}{\mathbb{Z}}
\newcommand{\C}{\mathbb{C}}
\newcommand{\na}{\nabla}
\renewcommand{\bar}{\overline}
\newcommand{\de}{\partial}
\newcommand{\debar}{\overline{\partial}}
\newcommand{\Def}{\operatorname{Def}}
\newcommand{\Tot}{\operatorname{Tot}}
\newcommand{\Hom}{\operatorname{Hom}}
\newcommand{\Tr}{\operatorname{Tr}}
\newcommand{\Ext}{\operatorname{Ext}}
\newcommand{\At}{\operatorname{At}}
\newcommand{\HOM}{{{\mathcal H}om}}
\newcommand{\DER}{{{\mathcal D}er}}
\begin{document}

\title{Connections and $L_{\infty}$ liftings of semiregularity maps}

\author{Emma Lepri}
\address{\newline
Universit\`a degli studi di Roma La Sapienza,\hfill\newline
Dipartimento di Matematica  Guido
Castelnuovo,\hfill\newline
P.le Aldo Moro 5,
I-00185 Roma, Italy.\medskip}
\email{emma.lepri@uniroma1.it}

\author{Marco Manetti}
\email{manetti@mat.uniroma1.it}
\urladdr{www.mat.uniroma1.it/people/manetti/}
\date{May 22, 2021}

\subjclass{14D15, 17B70}
\keywords{connections, Atiyah class, L-infinity maps, semiregularity}

\begin{abstract} Let $\sE^*$ be a finite complex of locally free sheaves on a complex manifold $X$. We prove that to every connection of type $(1,0)$ on $\sE^*$ it is canonically associated an $L_{\infty}$ morphism 
\[ g\colon A^{0, *}_X(\HOM^*_{\Oh_X}(\sE^*,\sE^*))\rightsquigarrow \dfrac{A^{*,*}_X}{A^{\ge 2,*}_X}[2]\] 
that lifts the 1-component of the Buchweitz-Flenner semiregularity map.	An application to deformations of coherent sheaves on projective manifolds is given.
\end{abstract}

\maketitle

\section{Introduction}

In the remarkable paper \cite{BF}, Buchweitz and Flenner generalise the classical Severi-Kodaira-Bloch's  semiregularity map to any coherent sheaf $\sF$ of locally finite projective dimension on an arbitrary complex space $X$. Their description involves exterior powers of the cotangent complex and when $X$ is a smooth complex manifold it
reduces to a map
\[ \sigma\colon  \Ext^2_X(\sF,\sF)\to \prod_{q\ge 0}H^{q+2}(X,\Omega_X^q)\,\]
defined in terms of the  Atiyah class $\At(\sF)\in \Ext^1_X(\sF,\sF\otimes \Omega^1_X)$  of $\sF$, see also \cite{KM}.
More precisely,  the Yoneda pairing 
\[ \Ext^i_X(\sF,\sF\otimes \Omega^i_X)\times \Ext^j_X(\sF,\sF\otimes \Omega^j_X)\to 
\Ext^{i+j}_X(\sF,\sF\otimes \Omega^{i+j}_X),\qquad (a,b)\mapsto a\circ b,\]
allows to define the exponential of the opposite of the Atiyah class
\[ \exp(-\At(\sF))\in \prod_{q\ge 0}\Ext^q_X(\sF,\sF\otimes \Omega^q_X)\,.\]
Since $X$ is assumed smooth, every coherent sheaf $\sF$ has locally finite projective dimension 
and then there are defined the trace maps 
\[ \Tr\colon \Ext_X^i(\sF,\sF\otimes \Omega^j_X)\to H^i(X,\Omega^j_X),\qquad i,j\ge 0\,.\]

It is worth  recalling that, as proved by Atiyah for vector bundles and by Illusie in the general case \cite{At,Ill}, when $X$ is a projective manifold, then with respect to the Hodge decomposition in cohomology, 
the trace of the above exponential gives the Chern character of $\sF$, cf. \cite[p.137]{BF}:
\[ \operatorname{ch}(\sF)=\Tr( \exp(-\At(\sF)))\,.\]
 
The Buchweitz-Flenner semiregularity map is defined by the formula:
\[ \sigma=\sum_{q\ge 0}\sigma_q\colon \Ext^2_X(\sF,\sF)\to \prod_{q\ge 0}H^{q+2}(X,\Omega_X^q),\qquad 
\sigma(c)=\Tr(\exp(-\At(\sF))\circ c)\,.\]
The name semiregularity is motivated by the fact that when $\sF=\Oh_Z$ for a locally complete intersection $Z$ of codimension $q$, then the map $\sigma_{q-1}$ is the same as the classical semiregularity map defined by Bloch \cite{bloch}.

The semiregularity map is important both in the variational Hodge conjecture and in deformation theory: we refer to \cite{bloch} for a discussion about the first subject, 
while for the second we recall that  $\Ext^2_X(\sF,\sF)$ 
is the obstruction space for the functor of deformations of $\sF$. Moreover,  a classical result by Mukai and Artamkin \cite{Arta,FMM,DMcoppie}, asserts that the $0$th component of the semiregularity map
\[ \sigma_0\colon \Ext^2_X(\sF,\sF)\to H^{2}(X,\Oh_X),\qquad \sigma_0(c)=\Tr(c),\]
annihilates obstructions; in other words the kernel of the trace map $\Tr\colon \Ext^2_X(\sF,\sF)\to H^{2}(X,\Oh_X)$ is an obstruction space for $\sF$.

One of the main results of \cite{BF} is that if the Hodge to de Rham spectral sequence of $X$ degenerates at $E_1$, then every curvilinear (and also semitrivial, see \cite{Man})  obstruction is annihilated 
by the semiregularity map. However Buchweitz and Flenner left unanswered the question of whether the semiregularity map annihilates every obstruction.   

As suggested in \cite{BF}, and then clarified in \cite{FioMan,semireg2011,Pri}, the correct way to interpret the semiregularity map is as the obstruction map of a morphism of deformation theories, with target in a product of  (formal) intermediate Jacobians: this implies in particular that from the point of view of deformation theory it is more appropriate, for every $q$,  to consider the composition 
\[  \tau_q\colon \Ext_X^2(\sF,\sF)\xrightarrow{\sigma_q}H^{q+2}(X,\Omega_X^q)=H^{2}(X,\Omega_X^q[q])\xrightarrow{i_q} \mathbb{H}^{2}(X,\Omega_X^{\le q}[2q]),\]
where $\Omega^{\le q}_X=(\oplus_{i=0}^q\Omega_X^i[-i],\de)$ is the truncated holomorphic de Rham complex and $i_q$ is induced by the inclusion of complexes $\Omega_X^{q}[q]\subset \Omega_X^{\le q}[2q]$.
Clearly, when the Hodge to de Rham spectral sequence of $X$ degenerates at $E_1$ the second map is injective and therefore $\tau_q$ and $\sigma_q$ have the same kernel.

Finally, the paper \cite{Pri} by Pridham contains a proof that the maps $\tau_q$ annihilate every obstruction, for every coherent sheaf on a smooth manifold, while the analogous result for the classical Bloch semiregularity map was previously proved, under some mild additional assumption, in 
\cite{semireg2011}.
Pridham  works in the framework of homotopy homogeneous functors in the category of simplicial commutative algebras in order to construct a morphism of deformation theories inducing the semiregularity map. 

The aim of this paper is to construct, by elementary methods, an explicit morphism of deformation theories 
lifting $\tau_1$ in the framework of differential graded Lie algebras. 
We expect that this explicit approach should also work  for the lifting of $\tau_q$, $q>1$, although this  appears, at this moment, extremely complicated from the computational point of view.  

Roughly speaking, by a nowadays well established theory, the category of deformation problems over a field of characteristic $0$ is equivalent to the homotopy category of DG-Lie algebras over the same field: two DG-Lie algebras are homotopy equivalent if they are quasi-isomorphic or, equivalently, if they have isomorphic $L_{\infty}$ minimal models, and 
every morphism in the homotopy category 
can be represented by  an $L_{\infty}$ morphism. The passage from homotopy classes of DG-Lie algebras to the standard, and more geometric, description of deformation problems is given by taking solutions of 
the Maurer-Cartan equation modulus gauge action, see e.g. \cite{Man,LMDT} and references therein.

It is well known that the deformation theory of a coherent sheaf $\sF$ is controlled by $R\Hom(\sF,\sF)$ (see e.g. \cite{FIM,HL,Meazz}), considered as an element  
in the homotopy category of differential graded Lie algebras: if $\sF$ admits a finite locally free resolution (e.g. if $X$ is a smooth projective manifold) 
\begin{equation} \label{equ.resolutionintro}
0\to \sE^{-n}\to \cdots\to \sE^0\to \sF\to 0
\end{equation}
then a representative of  $R\Hom(\sF,\sF)$ is given by the Dolbeault complex 
$A_X^{0,*}(\HOM^*_{\Oh_X}(\sE^*,\sE^*))$.

Recall that the quasi-isomorphic complexes $\sE^*$ and $\sF$ have the same Atiyah class and that 
the hypercohomology of $\Omega_X^{\le 1}[2]$ is computed by the truncated de Rham complex $A_X^{*,*}/A_X^{\ge 2,*}[2]$. As in the case of locally free sheaves (see e.g. \cite{ABTT,At}), the Atiyah class of $\sE^*$ can be computed by using connections of type $(1,0)$: we describe this construction in Section~\ref{sec.connection}.

Then the main results of this paper are:

\begin{theorem}[=Corollary~\ref{cor.thm.main2}] Let $\sF$ be a coherent sheaf on a complex manifold $X$ admitting a finite locally free resolution $\sE^*$ as in
\eqref{equ.resolutionintro}. Then the choice of a connection of type $(1,0)$ on $\sE^i$ for every $i$  gives an explicit lifting of 
\[  \tau_1\colon \Ext_X^*(\sF,\sF)\to \mathbb{H}^{*}(X,\Omega_X^{\le 1}[2])\]
to an $L_{\infty}$ morphism 
\[ g\colon A_X^{0,*}(\HOM^*_{\Oh_X}(\sE^*,\sE^*))\rightsquigarrow \frac{A_X^{*,*}}{A_X^{\ge 2,*}}[2]\,.\]
\end{theorem}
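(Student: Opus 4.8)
The strategy is to prove first the corresponding statement for an arbitrary finite complex $\sE^*$ of locally free sheaves carrying a $(1,0)$-connection on each term, and then to read off the Corollary by specialising to a finite locally free resolution of $\sF$. Write $L=A_X^{0,*}(\HOM^*_{\Oh_X}(\sE^*,\sE^*))$ for the Dolbeault DG-Lie algebra representing $R\Hom(\sF,\sF)$, with differential $\bar\partial+[\delta,-]$ ($\delta$ being the internal differential of $\sE^*$, with the usual Koszul signs) and bracket the graded commutator, and write $M=A_X^{*,*}/A_X^{\ge 2,*}[2]$ for the target, which is an \emph{abelian} DG-Lie algebra computing $\mathbb H^*(X,\Omega_X^{\le 1}[2])$ via the Dolbeault resolution of the truncated de Rham complex. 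Since $M$ is abelian, an $L_\infty$ morphism $g\colon L\rightsquigarrow M$ amounts to a sequence of graded antisymmetric maps $g_n\colon L^{\wedge n}\to M$ of degree $1-n$ subject only to the relations obtained from the usual $L_\infty$ equations by deleting every term that involves the vanishing bracket of $M$; thus $g_1$ must be a morphism of complexes, and for $n\ge 2$ the map $g_n$ is a chosen homotopy correcting the failure of $g_{n-1}$ to be a morphism of graded Lie algebras.

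By the construction in Section~\ref{sec.connection}, the chosen connections assemble into a defect operator $\nabla$ on $A_X^{0,0}(\sE^*)$, and the \emph{Atiyah cocycle}
\[ \alpha:=[\bar\partial+\delta,\nabla]\ \in\ A_X^{1,0}(\HOM^1_{\Oh_X}(\sE^*,\sE^*))\ \oplus\ A_X^{1,1}(\HOM^0_{\Oh_X}(\sE^*,\sE^*))\]
is $\Oh_X$-linear, satisfies the Bianchi identity $[\bar\partial+\delta,\alpha]=0$, and represents the Atiyah class of $\sF$. I define the linear term by $g_1(x)=-\Tr(\alpha\circ x)$, which by $\Oh_X$-linearity of $\alpha$ takes values in the $A_X^{1,*}$-summand of $M$. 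Using the graded cyclicity of the trace --- which yields $\Tr\circ[\delta,-]=0$ and makes $\Tr$ commute with $\bar\partial$ --- together with the Bianchi identity, one checks that $g_1$ is a \emph{strict} morphism of complexes. On cohomology, since every power $\alpha^{k}$ with $k\ge 2$ lies in $A_X^{\ge 2,*}$ and is killed in $M$ while the constant term of the exponential lands in the wrong degree, the map induced by $g_1$ equals $c\mapsto i_1\big(\Tr((-\At(\sF))\circ c)\big)=\tau_1(c)$; this is precisely the point at which only the Hodge-degree-$1$ term $-\At(\sF)$ of the Buchweitz--Flenner exponential $\exp(-\At(\sF))$ survives in the truncated target.

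For $n\ge 2$ the components $g_n$ are then essentially forced, and I would produce them by the Cartan-homotopy / transgression machinery of \cite{FioMan} (set up, in the present generality, in the earlier sections of this paper): the connection $\nabla$ supplies the ``contraction'' datum on the Dolbeault complex $A_X^{*,*}(\sE^*)$ --- filtered by holomorphic form degree --- that upgrades the cochain map $g_1$ to a full $L_\infty$ morphism into the abelianisation $M$. Concretely, $g_n(x_1,\dots,x_n)$ is a signed symmetrisation of a universal expression involving $\nabla$, $\delta$, $\bar\partial$, the curvature $\alpha$ and the arguments $x_i$, with coefficients coming from the transgression of the path of connections $(\bar\partial+\delta)+t\nabla$. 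That $g=\{g_n\}$ is an $L_\infty$ morphism then reduces, for each $n$, to the single identity $d_Mg_n-\sum_i\pm g_n(\dots,d_Lx_i,\dots)=\sum_{i<j}\pm g_{n-1}([x_i,x_j],\dots)$, which after substituting the formulas collapses --- by repeated use of the cyclicity of $\Tr$, of the Leibniz rule for the connection induced on $\END(\sE^*)$, and of the Bianchi identity $[\bar\partial+\delta,\alpha]=0$ --- to a telescoping cancellation of traces. The Corollary follows by transporting everything along $\sE^*\xrightarrow{\ \sim\ }\sF$, which preserves both the Atiyah class and the homotopy type of $R\Hom(\sF,\sF)$, and by the identification $H^*(M)\cong\mathbb H^*(X,\Omega_X^{\le 1}[2])$ under which $H^*(g_1)=\tau_1$.

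The main obstacle, as usual in this circle of ideas, is the bookkeeping concentrated in the last two steps: isolating the correct general Cartan-homotopy lemma and the correct signs and combinatorial coefficients in the $g_n$ so that all the trace identities cancel simultaneously for every $n$, and keeping track of the boundary contributions coming from the fact that $e^{-\alpha}$ is not closed --- only its defect is controlled --- which is what forces $g_1$ to be a strict, rather than merely a homotopy, morphism of complexes. A secondary, more routine point is the check that the homotopy class of $g$ is independent of the chosen connections: two $(1,0)$-connections on $\sE^i$ differ by a global element of $A_X^{1,0}(\END^0(\sE^i))$, which yields a homotopy between the corresponding Atiyah cocycles and hence an $L_\infty$ homotopy between the resulting morphisms.
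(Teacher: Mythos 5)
Your identification of the linear component is right and matches the paper: $g_1(x)=-\Tr(u\circ x)$ with $u$ the Atiyah cocycle of the connection, the verification that $g_1$ is a strict chain map via $[\bar\partial+\delta,u]=0$ and the cyclicity of the trace, and the observation that only the Hodge-degree-one term of the Buchweitz--Flenner exponential survives in the truncation, so that $H^*(g_1)=\tau_1$. However, the heart of the statement is the existence of the \emph{higher} components $g_n$, and here your plan has a genuine gap: you defer the entire construction to an unspecified ``Cartan-homotopy / transgression'' lemma whose signed universal expressions and coefficients you explicitly admit you have not pinned down. That is precisely the content that needs proving. The paper does not invoke any such machinery; it writes down closed formulas
\[
g_2(f,g)=-\tfrac{1}{2}\Tr\bigl(\nabla(f)g-(-1)^{\bar f\bar g}\nabla(g)f\bigr),\qquad
g_3(f,g,h)=\tfrac{1}{2}\Tr\bigl(f[g,h]\bigr),\qquad g_n=0 \text{ for } n>3,
\]
(more generally $\langle f,g\rangle=-\Tr(fg)$ can be replaced by any cyclic form compatible with the connection) and verifies the relations $C_1,\dots,C_4$ directly, the key inputs being $[\bar\partial+\delta,\nabla](a)=[u,a]$ and the compatibility $\langle\nabla f,g\rangle+(-1)^{\bar f}\langle f,\nabla g\rangle=\partial\langle f,g\rangle$. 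Note in particular that the morphism terminates at $n=3$; your framing, which anticipates a nontrivial $g_n$ ``for every $n$'' built from a path of connections, does not predict this and gives no reason to believe the infinitely many trace identities you would need actually cancel. Until the $g_n$ are exhibited and the relations checked, the theorem is not proved.

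A secondary but substantive error: you assert as a routine check that the homotopy class of $g$ is independent of the chosen connections. The paper proves the opposite (Remark~\ref{cor.main1}\,ff.): for a trivial rank-$2$ bundle on an elliptic curve, two holomorphic connections $d+\theta$, $d+\theta'$ both have vanishing Atiyah cocycle, hence $g_1=0$, yet the induced bilinear map $g_2(a,b)=\Tr(C[a,b])\,dz$ on $\Ext^0\times\Ext^0$ --- a homotopy invariant of $g$ --- changes rank with $C$. Independence of the connection is not needed for the corollary, but the claim as stated is false and signals that the proposed homotopy between the two $L_\infty$ morphisms does not exist in general.
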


In the above theorem, by the  term lifting we mean that 
the linear component 
\[ g_1\colon A_X^{0,*}(\HOM^*_{\Oh_X}(\sE^*,\sE^*))\to \frac{A_X^{*,*}}{A_X^{\ge 2,*}}[2]\,\]
is a morphism of complexes inducing $\tau_1$  in cohomology. A brief review of $L_{\infty}$ morphisms between differential graded Lie algebras will be given in Section~\ref{sec.Linfinito}.

Since $X$ is  smooth by assumption, according to Hilbert's syzygy theorem (see e.g. \cite[V.3.11]{Ko}), if a coherent sheaf $\sF$ on $X$ admits a locally free resolution, then it  also admits a finite locally free resolution. The following result is an almost immediate consequence of the above theorem.

\begin{theorem}[=Corollary~\ref{cor.main2}] Let $\sF$ be a coherent sheaf on a complex manifold $X$ admitting a locally free resolution. Then every obstruction to the deformations of $\sF$ belongs to the kernel of the map
\[  \tau_1\colon \Ext_X^2(\sF,\sF)\to \mathbb{H}^{2}(X,\Omega_X^{\le 1}[2]).\]
If the Hodge to de Rham spectral sequence of $X$ degenerates at $E_1$, then  
every obstruction to the deformations of $\sF$ belongs to the kernel of the map
\[  \sigma_1\colon \Ext_X^2(\sF,\sF)\to {H}^{3}(X,\Omega_X^{1}),\qquad \sigma_1(a)=-\Tr(\At(\sF)\circ a).\]
\end{theorem}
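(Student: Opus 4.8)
The plan is to deduce the second theorem from the first, which we are allowed to assume. The key structural observation is the following well-known principle in the DG-Lie approach to deformation theory: if $\chi\colon L \rightsquigarrow M$ is an $L_\infty$ morphism of DG-Lie algebras (or more generally $L_\infty$ algebras), then $\chi$ induces a morphism of the associated deformation functors $\Def_L \to \Def_M$, and consequently the linear component $\chi_1\colon H^2(L) \to H^2(M)$ sends obstructions of $\Def_L$ to obstructions of $\Def_M$. In particular, any obstruction of $\Def_L$ lying in the kernel of $H^2(\chi_1)$ that maps to zero because $H^2(M)$ itself carries no obstructions — which is the case when $M$ is homotopy abelian, since then $\Def_M$ is unobstructed — must already lie in $\ker H^2(\chi_1)$.

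\medskip
First I would record that the target complex $\dfrac{A_X^{*,*}}{A_X^{\ge 2,*}}[2]$, regarded as a DG-Lie algebra with trivial bracket, is abelian, hence the associated deformation functor has trivial obstruction theory: every obstruction in $\mathbb{H}^2\bigl(X, \Omega_X^{\le 1}[2]\bigr)$ coming from $\Def_M$ vanishes. Next, by Theorem~\ref{cor.thm.main2} (the first displayed theorem above), the choice of $(1,0)$-connections on the $\sE^i$ produces an $L_\infty$ morphism $g$ from $L := A_X^{0,*}(\HOM^*_{\Oh_X}(\sE^*,\sE^*))$ to this abelian target, whose linear part induces $\tau_1$ in degree $2$. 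Since $L$ is a representative of $R\Hom(\sF,\sF)$ in the homotopy category of DG-Lie algebras (as recalled in the introduction, using the finite locally free resolution guaranteed by Hilbert's syzygy theorem once $\sF$ admits any locally free resolution), the deformation functor $\Def_L$ is isomorphic to the functor of deformations of $\sF$, and obstructions correspond under this isomorphism. Combining these facts: any obstruction $\omega \in \Ext^2_X(\sF,\sF)$ to deforming $\sF$ is an obstruction of $\Def_L$; its image under $H^2(g_1) = \tau_1$ is an obstruction of $\Def_M$; but $\Def_M$ is unobstructed, so $\tau_1(\omega) = 0$. This proves the first assertion.

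\medskip
For the second assertion, assume the Hodge to de Rham spectral sequence of $X$ degenerates at $E_1$. Then, as noted in the introduction, the map $i_1\colon H^{3}(X,\Omega_X^1) = H^2(X, \Omega_X^1[1]) \to \mathbb{H}^2(X, \Omega_X^{\le 1}[2])$ is injective, and $\tau_1 = i_1 \circ \sigma_1$. Therefore $\tau_1(\omega) = 0$ forces $\sigma_1(\omega) = 0$, and the identification $\sigma_1(a) = -\Tr(\At(\sF)\circ a)$ is just the definition recalled at the start of the paper (the $q=1$ component of the Buchweitz–Flenner map, using that $\sF$ and its resolution $\sE^*$ have the same Atiyah class). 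This completes the argument.

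\medskip
The only genuinely delicate point — and the one I would state carefully rather than wave at — is the compatibility between the abstract obstruction theory of the DG-Lie algebra $L$ and the classical obstruction theory of the deformation functor $\Def_{\sF}$ of the coherent sheaf: one needs that an element of $\Ext^2_X(\sF,\sF) = H^2(L)$ which is an obstruction in the geometric sense (arising from lifting a deformation along a small extension) is also an obstruction in the DG-Lie sense, so that functoriality of obstructions under $L_\infty$ morphisms applies. This is exactly the content of the results cited in the introduction (\cite{FIM,HL,Meazz} for the identification of $R\Hom(\sF,\sF)$ with the controlling DG-Lie algebra, and the standard theory of \cite{Man,LMDT} for the behaviour of obstructions under morphisms of deformation functors induced by $L_\infty$ maps); the proof should invoke these precisely, and the remaining steps are then formal.
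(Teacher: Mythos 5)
Your proposal is correct and follows essentially the same route as the paper: reduce to a finite locally free resolution via the syzygy theorem, invoke Corollary~\ref{cor.thm.main2} to obtain the $L_\infty$ lift of $\tau_1$ to the abelian target $A_X^{*,*}/A_X^{\ge 2,*}[2]$, use that $L_\infty$ morphisms induce morphisms of deformation functors commuting with obstruction maps so that unobstructedness of the abelian target forces obstructions into $\ker\tau_1$, and finally use $E_1$-degeneration to get injectivity of $H^3(X,\Omega^1_X)\hookrightarrow\mathbb{H}^2(X,\Omega_X^{\le 1}[2])$ and hence $\ker\tau_1=\ker\sigma_1$. The "delicate point" you flag (identifying geometric obstructions of $\Def_{\sF}$ with the DG-Lie obstructions of $A_X^{0,*}(\HOM^*_{\Oh_X}(\sE^*,\sE^*))$) is handled in the paper exactly as you propose, by citation to the standard literature.
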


It is worth  pointing out that our approach is almost entirely algebraic and that 
the above corollary holds also over every smooth separated scheme of finite type over a field of characteristic 0 \cite{lepri}: a proof of this purely algebraic analogous result is outlined in Section~\ref{sec.outline}.

\medskip

\subsection{Notation}  If $V=\oplus V^i$ is either a graded vector space or a graded sheaf, we denote by $\bar{v}$ the degree of a homogeneous element $v\in V$. For every integer $p$ the symbol $[p]$ denotes the shift functor, defined by $V[p]^i=V^{p+i}$.

For a  complex manifold $X$, its de Rham complex  is denoted by $(A^{*,*}_X,d=\de+\debar)$ and  the subcomplexes of the Hodge filtrations by $A^{\ge p,*}_X$. 
The holomorphic de Rham complex of $X$ is denoted by $\Omega_X^*=(\oplus_{i\ge 0}\Omega_X^i[-i],\de)$.

\bigskip
\section{Connections and Atiyah classes}
\label{sec.connection}

The theory of connections of type $(1,0)$ on holomorphic vector bundles \cite{ABTT,At,Ko} extends without difficulty to every complex of locally free sheaves. For simplicity of exposition we consider here only the case of finite complexes, which is completely sufficient for our applications. 	
	
Let $X$ be a complex manifold and let  
\[ \sE^*\colon\qquad 0\to \sE^p\xrightarrow{\,\delta\,}\sE^{p+1}\xrightarrow{\,\delta\,}\cdots\xrightarrow{\,\delta\,}\sE^{q}\to 0,\qquad\quad p,q\in \Z,\quad \delta^2=0,\]
be a fixed finite complex of locally free sheaves of $\Oh_X$-modules. We denote by 
$\HOM^*_{\Oh_X}(\sE^*,\sE^*)$ the graded sheaf of $\Oh_X$-linear endomorphisms of $\sE^*$: 
\[ \HOM^*_{\Oh_X}(\sE^*,\sE^*)=\bigoplus_i \HOM^i_{\Oh_X}(\sE^*,\sE^*),\qquad 
\HOM^i_{\Oh_X}(\sE^*,\sE^*)=\prod_j\HOM_{\Oh_X}(\sE^j,\sE^{i+j})\,.\]
Then $\HOM^*_{\Oh_X}(\sE^*,\sE^*)$ is a sheaf of locally free DG-Lie algebras over $\Oh_X$, 
with the bracket equal to the graded commutator 
\[ [f,g]=fg-(-1)^{\bar{f}\,\bar{g}}gf\]
and the differential given by 
\[f\mapsto [\delta,f]=\delta f-(-1)^{\bar{f}}f\delta\,.\]

For every $a,b,r$ 
denote by $\sA^{a,b}_X(\sE^r)\simeq \sA^{a,b}_X\otimes_{\Oh_X}\sE^r$ the sheaf of differential forms of type $(a,b)$ with coefficients in $\sE^r$, and by \[ \debar\colon \sA^{a,b}_X(\sE^r)\to \sA^{a,b+1}_X(\sE^r),\qquad \debar(\phi\cdot e)=\debar(\phi)\cdot e\] the Dolbeault differential.

	We consider  
	\[ \sA^{*,*}_X(\sE^*)=\bigoplus_{a,b,r}\sA^{a,b}_X(\sE^r)\]
	as a graded sheaf on $X$, where the elements of $\sA^{a,b}_X(\sE^r)$ have degree $a+b+r$.
	Unless otherwise specified, when we write  $\phi\cdot e\in \sA^{*,*}_X(\sE^*)$ we  mean that $\phi$ is a differential form and  $e$ is a holomorphic section of $\sE^*$, or a germ of one.  
Sometimes, for notational simplicity, whenever $\psi\in \sA^{*,*}_X$ and 
$h\in \sA^{*,*}_X(\sE^*)$ we denote by $\psi\cdot h\in \sA^{*,*}_X(\sE^*)$ their product, namely the bilinear extension of $\psi\cdot (\phi\cdot e)=(\psi\wedge \phi)\cdot e$, $\phi\in \sA^{*,*}_X$, $e\in\sE^*$.

	In accordance with the Koszul sign rule, the differential $\delta$ can be extended to a differential 
	\[ \delta\colon \sA^{a,b}_X(\sE^r)\to \sA^{a,b}_X(\sE^{r+1}),\qquad \delta(\phi\cdot e)=
	(-1)^{\bar{\phi}}\,\phi\cdot \delta(e)\,.\]
	We have that $\debar^2=\delta^2=0$ and $[\debar,\delta]=\debar\delta+\delta\debar=0$, so that
	$\debar+\delta$ is a differential in $\sA^{*,*}_X(\sE^*)$.
	
	Therefore the space of $\C$-linear morphisms of sheaves 
	\[ \Hom_{\C}^*(\sA^{*,*}_X(\sE^*),\sA^{*,*}_X(\sE^*))\]
 carries a natural structure of  differential graded associative algebra:  the product is given by composition and the differential is the graded commutator with $\debar+\delta$.
 	
	Denoting by $A^{a,b}_X(-)$ the global sections of $\sA^{a,b}_X(-)$ we have two differential graded subalgebras
	\[ A^{0,*}_X(\HOM^*_{\Oh_X}(\sE^*,\sE^*))\subset A^{*,*}_X(\HOM^*_{\Oh_X}(\sE^*,\sE^*))
	\subset \Hom_{\C}^*(\sA^{*,*}_X(\sE^*),\sA^{*,*}_X(\sE^*)),\]
where for  
	\[ \omega, \eta \in \sA^{*,*}_X,\quad f\in \HOM^*_{\Oh_X}(\sE^*,\sE^*),\quad e\in \sE^*\]
	one has that 
	\[ (\omega\cdot f)(\eta\cdot e)=(-1)^{\bar{f}\bar{\eta}}(\omega\wedge\eta)\cdot f(e)\,,\]
	so that the elements of $A^{a,b}_X(\HOM^n_{\Oh_X}(\sE^*,\sE^*))$ have degree $a+b+n$.
For every $a \in A^{*,*}_X(\HOM^*_{\Oh_X}(\sE^*, \sE^*))$ we have 
		\begin{equation}\label{rem.debar} 
		\debar a=[\debar, a ]
		\end{equation}
where the bracket on the right is intended in the DG-Lie algebra $\Hom_{\C}^*(\sA^{*,*}_X(\sE^*),\sA^{*,*}_X(\sE^*))$. In fact, for $\omega, \eta\in \sA^{*,*}_X$, $f \in \HOM^*_{\Oh_X}(\sE^*, \sE^*)$ and $e\in \sE^*$ we have:
\begin{align*}
		[\debar, \omega \cdot f] (\eta \cdot e) &= \debar ((-1)^{\overline{f} \overline{\eta}} \omega \wedge \eta \cdot f(e)) - (-1)^{\overline{\omega}+ \overline{f}} (\omega \cdot f) (\debar (\eta) \cdot e ) \\
		& = (-1)^{\overline{f} \overline{\eta}} \debar (\omega) \wedge \eta \cdot f(e) + (-1)^{\overline{f} \overline{\eta}+ \overline{\omega}} \omega \wedge \debar ( \eta)  \cdot f(e) - (-1)^{\overline{\omega}+ \overline{f} \overline{\eta}} \omega \wedge \debar (\eta) \cdot f (e) \\
		& = (\debar (\omega)\cdot f)(\eta \cdot e). 
		\end{align*}

The composition product in $A^{*,*}_X(\HOM^*_{\Oh_X}(\sE^*,\sE^*))$ works in the following way:
\[ \omega, \eta\in \sA^{*,*}_X,\quad f,g\in \HOM^*_{\Oh_X}(\sE^*,\sE^*)\]
\[ (\omega\cdot f)(\eta\cdot g)=(-1)^{\bar{f}\bar{\eta}}(\omega\wedge\eta)\cdot fg\,,\]
and the commutator is
\[ [\omega\cdot f,\eta\cdot g]=(\omega\cdot f)(\eta\cdot g)-(-1)^{(\bar{\omega}+\bar{f})
		(\bar{\eta}+\bar{g})}(\eta\cdot g)(\omega\cdot f)=(-1)^{\bar{f}\bar{\eta}}
	(\omega\wedge\eta)\cdot [f,g]\,.\]
The above commutator and the differential $[\delta +\debar, -]=[\delta, -] + \debar$ give 
$A^{*,*}_X (\HOM^*_{\Oh_X}(\sE^*, \sE^*))$ a structure of DG-Lie algebra.
	
\begin{definition} Let $\sE^*$ be a finite complex of locally free sheaves on a complex manifold $X$.
			A connection of type $(1,0)$ on $\sE^*$ is an element $D$ of
		$\Hom_{\C}^1(\sA^{*,*}_X(\sE^*),\sA^{*,*}_X(\sE^*))$ such that:
		\begin{enumerate}
			
	\item $D(\phi\cdot s)=d\phi\cdot s+(-1)^{\bar{\phi}}\phi\cdot D(s)$, with $s\in \sE^r$ and $\phi\in \sA^{*,*}_X$;\smallskip
	
	\item if $s\in \sE^r$ is a holomorphic section then $D(s)\in \sA^{1,0}_X(\sE^r)$.
			
	\end{enumerate}
\end{definition}
The second condition is equivalent to $D = D^{1,0}+ \debar$, with 
$D^{1,0} \colon \sA^{a,b}_X(\sE^r) \to \sA^{a+1,b}_X(\sE^r)$ for every $a,b,r$. 

Basically it is the same as giving a connection which is compatible with the holomorphic structure on every vector bundle of the complex. In particular, connections of type $(1,0)$ always exist.

From now on, let $D=D^{1,0}+ \debar$ be a fixed connection of type $(1,0)$ on a fixed finite complex $\sE^*$ 
of locally free sheaves. Denote by 
\[\nabla  =[D - \debar,-]=[D^{1,0},-]\colon \Hom_{\C}^*(\sA^{*,*}_X(\sE^*),\sA^{*,*}_X(\sE^*))\to \Hom_{\C}^*(\sA^{*,*}_X(\sE^*),\sA^{*,*}_X(\sE^*))\]
the adjoint operator of the $(1,0)$-component of the connection $D$.

As in the classical case, the adjoint operator $[D,-]$ is a connection of type $(1,0)$ in the complex of locally free sheaves $\HOM^*_{\Oh_X}(\sE^*,\sE^*)$. We need this fact only in the weaker form given by the following lemma.

\begin{lemma}\label{lem.operatoreaggiunto} 
In the above situation, 
for every $h\in A^{*,*}_X(\HOM^*_{\Oh_X}(\sE^*,\sE^*))$ we have 
		\[  [D, h],\, \nabla(h) \in A^{*,*}_X(\HOM^*_{\Oh_X}(\sE^*,\sE^*))\,.\]
More precisely, if $\eta \in \sA^{a,b}_X$ and $g \in \HOM^n_{\Oh_X}(\sE^*, \sE^*))$, then
	$ \nabla (\eta \cdot g) \in A^{a+1, b}_X (\HOM^n_{\Oh_X}(\sE^*,\sE^*))$.
\end{lemma}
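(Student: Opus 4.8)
The plan is to reduce the claim, using that $\nabla$ is $\C$-linear and that membership in the subsheaf $\sA^{*,*}_X(\HOM^*_{\Oh_X}(\sE^*,\sE^*))$ is a local question, to the case of a single term $\eta\cdot g$, where $\eta\in\sA^{a,b}_X$ is a differential form and $g\in\HOM^n_{\Oh_X}(\sE^*,\sE^*)$ is a germ of a holomorphic $\Oh_X$-linear endomorphism of degree $n$: on a trivializing open set any section of $\sA^{a,b}_X(\HOM^n_{\Oh_X}(\sE^*,\sE^*))$ is a finite sum of such terms. For these, the goal is the explicit Leibniz identity
\[ \nabla(\eta\cdot g)=\de\eta\cdot g+(-1)^{a+b}\,\eta\cdot\nabla(g)\,.\]
Granting it, the conclusion is immediate: $\de\eta\cdot g$ lies in $A^{a+1,b}_X(\HOM^n_{\Oh_X}(\sE^*,\sE^*))$ because $\de\eta\in\sA^{a+1,b}_X$, while $\eta\cdot\nabla(g)$ does too because $\nabla(g)\in\sA^{1,0}_X(\HOM^n_{\Oh_X}(\sE^*,\sE^*))$ (treated below) and left multiplication by the $(a,b)$-form $\eta$ carries type $(1,0)$ into type $(a+1,b)$. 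Finally, for a general $h$ one has $[D,h]=\nabla(h)+[\debar,h]$, and $[\debar,h]=\debar h$ already belongs to $A^{*,*}_X(\HOM^*_{\Oh_X}(\sE^*,\sE^*))$ by \eqref{rem.debar}.

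To prove the identity I would work inside the differential graded associative algebra $\Hom_{\C}^*(\sA^{*,*}_X(\sE^*),\sA^{*,*}_X(\sE^*))$ and factor $\eta\cdot g=(\eta\cdot\Id)\circ g$. Since $\nabla=[D^{1,0},-]=\ad_{D^{1,0}}$ is a graded derivation of the composition product, this gives
\[ \nabla(\eta\cdot g)=\nabla(\eta\cdot\Id)\circ g+(-1)^{a+b}(\eta\cdot\Id)\circ\nabla(g)\,.\]
Now $\eta\cdot\Id$ is just left multiplication by $\eta$, and a direct computation from condition (1) of the definition of a connection yields $[D,\eta\cdot\Id]=d\eta\cdot\Id$ --- a computation entirely analogous to the one displayed in the text for \eqref{rem.debar}. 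Comparing types, and using \eqref{rem.debar} to identify $[\debar,\eta\cdot\Id]=\debar\eta\cdot\Id$, one gets $\nabla(\eta\cdot\Id)=[D^{1,0},\eta\cdot\Id]=\de\eta\cdot\Id$. Substituting this and simplifying the two compositions with the sign conventions recalled in this section produces the stated formula.

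It remains to see that $\nabla(g)\in\sA^{1,0}_X(\HOM^n_{\Oh_X}(\sE^*,\sE^*))$ for $g$ as above. The idea is that $g$, extended to $\sA^{*,*}_X(\sE^*)$ by the Koszul rule, is $\sA^{*,*}_X$-linear, hence graded-commutes with left multiplication by every form; combining this with $[D^{1,0},\psi\cdot\Id]=\de\psi\cdot\Id$ and the graded Jacobi identity forces $\nabla(g)=[D^{1,0},g]$ to graded-commute with left multiplication by every form as well, so $\nabla(g)$ is again $\sA^{*,*}_X$-linear. Consequently $\nabla(g)$ is determined by its restriction to holomorphic sections, and for $s\in\sE^r$ one has $\nabla(g)(s)=D^{1,0}(g(s))-(-1)^n g(D^{1,0}(s))\in\sA^{1,0}_X(\sE^{r+n})$ by condition (2). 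Choosing holomorphic frames for the finitely many finite-rank sheaves $\sE^r$, $\nabla(g)$ is then represented by a matrix of $(1,0)$-forms, which is precisely the local description of a section of $\sA^{1,0}_X(\HOM^n_{\Oh_X}(\sE^*,\sE^*))$.

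The computations in the first two paragraphs are formal up to Koszul sign bookkeeping, of the same sort as the identity \eqref{rem.debar} already proved in the text. I expect the real obstacle to be the last point --- that the covariant derivative $\nabla(g)$ of an $\Oh_X$-linear endomorphism is again $\Oh_X$-linear, i.e.\ the graded, finite-complex analogue of the classical statement that a connection on $\sE$ induces a connection on $\HOM(\sE,\sE)$ for which tensorial objects remain tensorial --- together with the translation of this into the assertion that $\nabla(g)$ is a genuine section of the subalgebra. Everything else then follows from the derivation property of $\nabla$ and the reductions above.
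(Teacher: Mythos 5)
Your proof is correct, and it takes a genuinely different route from the paper's. The paper proves the whole lemma in one stroke: it identifies $A^{*,*}_X(\HOM^*_{\Oh_X}(\sE^*,\sE^*))$ with the $\sA^{*,*}_X$-linear endomorphisms of $\sA^{*,*}_X(\sE^*)$ and then verifies by a single direct expansion that $[D,h](fs)=(-1)^{(\bar h+1)\bar f}f\cdot[D,h](s)$ for arbitrary $h$ in that subalgebra, after which $\nabla(h)=[D,h]-\debar h$ lies there by \eqref{rem.debar}. You instead localize, factor $\eta\cdot g=(\eta\cdot\Id)\circ g$, exploit that $\ad_{D^{1,0}}$ is a graded derivation of the composition product, and handle the holomorphic endomorphism $g$ separately via the graded Jacobi identity applied to $[g,\psi\cdot\Id]=0$. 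The trade-off: the paper's computation is shorter and uniform in $h$, whereas your decomposition yields the explicit Leibniz formula $\nabla(\eta\cdot g)=\de\eta\cdot g+(-1)^{a+b}\eta\cdot\nabla(g)$, from which the bidegree refinement $\nabla(\eta\cdot g)\in A^{a+1,b}_X(\HOM^n_{\Oh_X}(\sE^*,\sE^*))$ — the ``more precisely'' clause, which the paper's proof leaves implicit — drops out immediately, together with the fact (used later, e.g.\ in Lemma~\ref{lem.tracciaconn}) that $[D,-]$ behaves as a connection on $\HOM^*_{\Oh_X}(\sE^*,\sE^*)$. Your identification of the final step (tensoriality of $\nabla(g)$ for $\Oh_X$-linear $g$) as the real content is accurate; your Jacobi-identity argument for it is sound and, if anything, cleaner than redoing the sign bookkeeping by hand.
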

	
\begin{proof} Since every $\sE^i$ is locally free, we can describe 	$A^{*,*}_X(\HOM^*_{\Oh_X}(\sE^*,\sE^*))$ as the set of morphisms of graded sheaves $h\colon \sA^{*,*}_X(\sE^*)\to \sA^{*,*}_X(\sE^*)$ that are 
$\sA^{*,*}_X$ linear, i.e., $h(f\cdot s)=(-1)^{\bar{h}\,\bar{f}}f\cdot h(s)$, $f\in \sA^{*,*}_X$, $s\in \sA^{*,*}_X(\sE^*)$.

Thus, for every $f\in \sA^{*,*}_X$ and $s\in \sA^{*,*}_X(\sE^*)$ we have
\[\begin{split} 
[D,h](fs)&=(-1)^{\bar{h}\,\bar{f}}D(fh(s))-(-1)^{\bar{h}}h(df\cdot s+(-1)^{\bar{f}}f\cdot D(s))\\
&=(-1)^{\bar{h}\,\bar{f}}(df\cdot h(s)+(-1)^{\bar{f}}fD(h(s)))-(-1)^{\bar{h}+\bar{h}(\bar{f}+1)}df\cdot h(s)-
(-1)^{\bar{h}+\bar{f}+\bar{h}\bar{f}} f\cdot h(D(s))\\
&=(-1)^{(\bar{h}+1)\bar{f}}f\cdot [D,h](s).\end{split}\]
This proves that $[D,h]\in  A^{*,*}_X(\HOM^*_{\Oh_X}(\sE^*,\sE^*))$. According to \eqref{rem.debar} we also have 
\[\nabla(h)=[D,h]-\debar(a)\in A^{*,*}_X(\HOM^*_{\Oh_X}(\sE^*,\sE^*))\,.\]
\end{proof}

	\begin{lemma} In the above notation, define 
	\[u=[D,\debar+\delta]=[\debar+\delta,D] = \nabla (\debar + \delta)\,,\] 
	where the last equality follows by $[\debar,\debar+\delta]=0$. Then 
	$u\in A^{*,*}_X(\HOM^*_{\Oh_X}(\sE^*,\sE^*))$, and more precisely
		\[ u\in A^{1,1}_{X}(\HOM^0_{\Oh_X}(\sE^*,\sE^*))\oplus A^{1,0}_X(\HOM^1_{\Oh_X}(\sE^*,\sE^*))\,.\]
	\end{lemma}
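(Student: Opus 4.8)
The plan is to prove the statement in two parts: first that $u \in A^{*,*}_X(\HOM^*_{\Oh_X}(\sE^*,\sE^*))$, and then to pin down the precise bidegree/cohomological-degree decomposition. For the first part I would invoke Lemma~\ref{lem.operatoreaggiunto}: since $\debar + \delta$ is not itself an element of $A^{*,*}_X(\HOM^*_{\Oh_X}(\sE^*,\sE^*))$, I cannot apply the lemma directly, but I can reduce to pieces that are. Write $u = [D, \debar + \delta] = [D^{1,0}, \debar] + [D^{1,0}, \delta] + [\debar, \debar] + [\debar, \delta]$. The last two terms vanish because $\debar^2 = 0$ and $[\debar,\delta]=0$. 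For $[D^{1,0},\delta] = \nabla(\delta)$: the operator $\delta$ is (up to the Koszul sign) multiplication by the holomorphic $\Oh_X$-linear map $\delta \in \HOM^1_{\Oh_X}(\sE^*,\sE^*)$, i.e. $\delta$ coincides with the action of the element $1 \cdot \delta \in A^{0,0}_X(\HOM^1_{\Oh_X}(\sE^*,\sE^*))$, so Lemma~\ref{lem.operatoreaggiunto} applies and gives $\nabla(1\cdot\delta) \in A^{1,0}_X(\HOM^1_{\Oh_X}(\sE^*,\sE^*))$. This already accounts for the second summand in the claimed decomposition.

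The remaining and genuinely new term is $[D^{1,0},\debar]$, which is not of the form $\nabla(h)$ for an $h$ in the endomorphism subalgebra, so Lemma~\ref{lem.operatoreaggiunto} does not apply as a black box; this is the main obstacle. I would handle it by a direct local computation in the spirit of the proof of Lemma~\ref{lem.operatoreaggiunto}: test the operator $[D^{1,0},\debar]$ on an element $\phi \cdot s$ with $s$ a holomorphic section of $\sE^r$ and $\phi \in \sA^{*,*}_X$, using property (1) of the connection and the Leibniz rule for $\debar$. The $d\phi \cdot s$ and $\phi \cdot D^{1,0}(s)$ terms produced by $D^{1,0}$, once $\debar$ is commuted past them, will combine so that all the purely scalar derivative terms cancel, leaving an expression of the form $\phi \cdot \big(\debar D^{1,0}(s)\big)$ (with the appropriate sign); the upshot is that $[D^{1,0},\debar]$ is $\sA^{*,*}_X$-linear, hence lies in $A^{*,*}_X(\HOM^*_{\Oh_X}(\sE^*,\sE^*))$, and it raises the holomorphic-form degree by $1$ and the antiholomorphic-form degree by $1$ while preserving $r$, so it lands in $A^{1,1}_X(\HOM^0_{\Oh_X}(\sE^*,\sE^*))$. (Concretely: if $s$ is holomorphic then $\debar s = 0$, so $[D^{1,0},\debar]s = D^{1,0}\debar s - \debar D^{1,0} s = -\debar D^{1,0}(s)$, and $D^{1,0}(s) \in \sA^{1,0}_X(\sE^r)$ by property (2), so $\debar D^{1,0}(s) \in \sA^{1,1}_X(\sE^r)$.)

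Combining the two parts, $u = [D^{1,0},\debar] + \nabla(1\cdot\delta) \in A^{1,1}_X(\HOM^0_{\Oh_X}(\sE^*,\sE^*)) \oplus A^{1,0}_X(\HOM^1_{\Oh_X}(\sE^*,\sE^*))$, which is the assertion; in particular $u$ is homogeneous of degree $2$ in $A^{*,*}_X(\HOM^*_{\Oh_X}(\sE^*,\sE^*))$, consistent with its definition as a commutator of two degree-$1$ operators. As a sanity check I would also note that the three stated formulas for $u$ agree: $[D,\debar+\delta] = [\debar+\delta,D]$ holds because the graded commutator of two odd operators is symmetric, and $[D,\debar+\delta] = \nabla(\debar+\delta)$ because $\nabla = [D,-] - [\debar,-]$ differs from $[D,-]$ by $[\debar,\debar+\delta] = 0$.
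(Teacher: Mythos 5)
Your proposal is correct and follows essentially the same route as the paper: split $u$ into $[D,\delta]=\nabla(\delta)$, handled by Lemma~\ref{lem.operatoreaggiunto} since $\delta$ acts as the element $1\cdot\delta\in A^{0,0}_X(\HOM^1_{\Oh_X}(\sE^*,\sE^*))$, plus the term $[D,\debar]=[D^{1,0},\debar]$, whose $\sA^{*,*}_X$-linearity is checked by the same direct local computation on $\phi\cdot s$. The only slip is in your parenthetical sanity check: since $D^{1,0}$ and $\debar$ are both odd, the graded commutator is $D^{1,0}\debar+\debar D^{1,0}$, so on a holomorphic section $[D^{1,0},\debar](s)=+\debar D^{1,0}(s)$, which of course still lies in $\sA^{1,1}_X(\sE^r)$ and does not affect the conclusion.
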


	\begin{proof} 
	
	The element $u\colon \sA^{*,*}_X(\sE^*)\to \sA^{*,*}_X(\sE^*)$ is a morphism of graded sheaves of even degree and we need to show that it is $\sA^{*,*}_X$-linear. By Lemma~\ref{lem.operatoreaggiunto} 
we have $[D, \delta]\in A^{1,0}_X(\HOM^1_{\Oh_X}(\sE^*, \sE^*))$ and we only need to prove that  	
$[D, \debar]$ is  $\sA^{*,*}_X$-linear. 
	
For $f \in \sA^{*,*}_X, s\in \sA^{*,*}_X(\sE^*)$ we have:	
	
\[\begin{split}
[D, \debar](fs)&= D (\debar f \cdot s +(-1)^{\bar{f}}f\cdot \debar s ) 
+ \debar  (df \cdot s +(-1)^{\bar{f}} f D (s) ) \\[3pt]
&= d (\debar f) \cdot s +(-1)^{\bar{f}+1}\debar f\cdot D(s)+(-1)^{\bar{f}}df\cdot \debar s+fD(\debar(s))\\
&\quad+\debar(df)\cdot s+(-1)^{\bar{f}+1}df\cdot \debar s  +(-1)^{\bar f}\debar f\cdot D(s)+f\debar(D(s))  \\
& = f [D, \debar ](s).\end{split}\]
Therefore $[D, \debar]\in A^{1,1}_X(\HOM^0_{\Oh_X}(\sE^*, \sE^*))$.
	\end{proof}

\begin{definition}[Atiyah class]\label{def.atiyahclass}
The Atiyah cocycle $u\in A^{1,*}_X(\HOM^*_{\Oh_X}(\sE^*,\sE^*))$ of the connection $D$ of type $(1,0)$ as above is defined by the formula
\[u=[D,\debar+\delta]=[\debar+\delta,D] = \nabla (\debar + \delta)\,.\]
 
The fact that $[\debar+\delta,u]=\debar(u)+[\delta,u]=0$, i.e., that $u$ is a cocycle 
follows immediately by the Jacobi identity:
\[  [\debar+\delta,u]=[\debar+\delta,[\debar+\delta,D]]=\frac{1}{2}[[\debar+\delta,\debar+\delta],D]=0\,.\]
	
The Atiyah class of $\sE^*$ is the cohomology class of the Atiyah cocycle 
\[ \At(\sE^*)=[u]\in \mathbb{H}^2(A^{1,*}_X(\HOM^*(\sE^*,\sE^*)))=\Ext_X^1(\sE^*,\Omega^1\otimes \sE^*)\,.\]  
	\end{definition}
	~\\

The Atiyah class does not depend on the choice of the connection of type $(1,0)$: two such connections $D, D'$ differ by $a \in A^{1,0}_X(\HOM^0_{\Oh_X}(\sE^*,\sE^*))$, so that 
	\begin{align*}
	& u'= [D', \debar + \delta] = [D + a, \debar + \delta ] = [D, \debar + \delta] + [a, \debar + \delta] = u + [\delta, a] + \debar a,
	\end{align*}
	and $u$ and $u'$ represent the same cohomology class.

It is straightforward to verify that the above definition of $\At(\sE^*)$ is completely equivalent to the one 
given in standard literature, especially \cite[Section 10.1]{HL} and \cite{BF}: therefore the Atiyah class is a homotopy invariant  and depends only on the isomorphism class  of the complex $\sE^*$ in the derived category of bounded complexes of locally free sheaves.

In particular, if $X$ is smooth projective and $\sE^* \to \sF$ is a finite locally free resolution of a coherent sheaf $\sF$, then the Atiyah class of $\sF$ is properly defined as $\At(\sF)=\At(\sE^*)$ and depends only on the class of $\sF$ in the bounded derived category of $X$.

\begin{lemma}\label{lem.dettagli} In the above setup,
	for every $a \in A^{*,*}_X(\HOM^*_{\Oh_X}(\sE^*,\sE^*))$ we have:
	\[ 
	[\delta + \debar,\nabla] (a)=[u,a],\qquad [\nabla (\debar), a] = \nabla(\debar a) + \debar \nabla (a)\,.\]
In particular, if $a$ is closed, then $[u,a]$ is exact. 
\end{lemma}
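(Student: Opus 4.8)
The plan is to verify all three claims by unwinding the definitions of $\nabla$, $u$, and the Jacobi identity inside the associative DG-Lie algebra $\Hom^*_{\C}(\sA^{*,*}_X(\sE^*),\sA^{*,*}_X(\sE^*))$, where everything becomes a computation with nested brackets. The key structural observation is that $\nabla = [D,-] - \debar = [D,-] - [\debar,-]$ as operators, but to be careful I should treat $\nabla$ as $[D^{1,0},-]$ directly. Write $\nabla = [D,-]-[\debar,-]$; this is the convenient form since both $[D,-]$ and $[\debar,-]$ are derivations of the associative product. Then for the first identity, I would compute
\[
[\delta+\debar,\nabla](a) = [\delta+\debar,[D,a]] - [\delta+\debar,[\debar,a]].
\]
Using the (graded) Jacobi identity on each term: $[\delta+\debar,[D,a]] = [[\delta+\debar,D],a] + [D,[\delta+\debar,a]] = [u,a] + [D,[\delta+\debar,a]]$ since $u = [\delta+\debar,D]$ up to sign; and $[\delta+\debar,[\debar,a]] = [[\delta+\debar,\debar],a] + [\debar,[\delta+\debar,a]] = [\delta,\debar,a]\text{-type term} + [\debar,[\delta+\debar,a]]$, where $[\delta+\debar,\debar] = [\delta,\debar] = 0$. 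Subtracting, the two "$[\delta+\debar,[\cdot,a]]$ with inner $D$ resp. $\debar$" pieces recombine into $[D-\debar,[\delta+\debar,a]] = [\nabla\text{-generator},[\delta+\debar,a]]$, which I must check cancels against nothing — actually I expect it to cancel because $[\delta+\debar,a]$ is again in $A^{*,*}_X(\HOM)$ and applying $\nabla$ to it is legitimate by Lemma~\ref{lem.operatoreaggiunto}, but the point is the leftover term is $\nabla([\delta+\debar,a])$ and on the other side $[\delta+\debar,\nabla(a)]$ expands to contain $\nabla([\delta+\debar,a])$ with opposite sign. So careful bookkeeping of the super-Leibniz signs is what makes the identity $[\delta+\debar,\nabla](a) = [u,a]$ come out; this is the first displayed claim, since $[\delta+\debar,\nabla](a)$ by definition means $[\delta+\debar,\nabla(a)] \pm \nabla([\delta+\debar,a])$.

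For the second identity, $[\nabla(\debar),a] = \nabla(\debar a) + \debar\nabla(a)$, the same mechanism applies but now with $\debar$ in place of $\delta+\debar$ and $\nabla(\debar)$ in place of $u$. I would write $\nabla(\debar) = [D,\debar] - [\debar,\debar] = [D,\debar]$ since $[\debar,\debar] = 2\debar^2 = 0$. Then by Jacobi, $[[D,\debar],a] = [D,[\debar,a]] - [\debar,[D,a]]$ (with the appropriate sign, $a$ having even-ish parity issues tracked), i.e. $[\nabla(\debar),a] = [D,\debar a] - [\debar,[D,a]]$. Now $[D,\debar a] = \nabla(\debar a) + [\debar,\debar a] = \nabla(\debar a) + \debar(\debar a)$ — wait, more precisely $[D,b] = \nabla(b) + [\debar,b] = \nabla(b) + \debar b$ for $b \in A^{*,*}_X(\HOM)$ — and $[\debar,[D,a]] = \debar(\nabla(a) + \debar a) = \debar\nabla(a) + \debar\debar a = \debar\nabla(a)$. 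Substituting, the $\debar\debar a$ terms vanish and I land on $\nabla(\debar a) + \debar\nabla(a)$, as claimed. The main subtlety here is that the identity $\debar b = [\debar,b]$ from \eqref{rem.debar} must be invoked to convert between the "operator commutator with $\debar$" and the "Dolbeault differential $\debar$ on $A^{*,*}_X(\HOM)$" pictures, and one has to confirm $\nabla$ maps $A^{*,*}_X(\HOM)$ to itself (Lemma~\ref{lem.operatoreaggiunto}) so that all these brackets stay inside the DG-Lie subalgebra where the formulas make sense.

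Finally, the last sentence — if $a$ is closed then $[u,a]$ is exact — is immediate from the first identity: if $[\delta+\debar,a] = 0$, then $[u,a] = [\delta+\debar,\nabla](a) = [\delta+\debar,\nabla(a)] \pm \nabla([\delta+\debar,a]) = [\delta+\debar,\nabla(a)]$, which is a coboundary for the differential $[\delta+\debar,-]$ of the DG-Lie algebra $A^{*,*}_X(\HOM^*_{\Oh_X}(\sE^*,\sE^*))$, with $\nabla(a)$ a legitimate element of that algebra by Lemma~\ref{lem.operatoreaggiunto}. I expect the only real obstacle to be sign discipline: every application of the graded Jacobi identity and graded Leibniz rule introduces signs depending on $\bar a$, $\bar D = 1$, $\bar\delta = 1$, $\bar\debar = 1$, and one must choose a consistent convention (I would fix $a$ homogeneous of degree $\bar a$ and track $(-1)^{\bar a}$ factors throughout) and check at the end that they telescope to exactly the stated formulas with no residual sign. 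There is no conceptual difficulty beyond this.
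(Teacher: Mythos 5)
Your proposal is correct and follows essentially the same route as the paper: both identities are obtained from the graded Jacobi identity applied to the adjoint operators, together with the relation $\debar b=[\debar,b]$ from \eqref{rem.debar} and Lemma~\ref{lem.operatoreaggiunto} to keep everything inside $A^{*,*}_X(\HOM^*_{\Oh_X}(\sE^*,\sE^*))$; your decomposition $\nabla=[D,-]-[\debar,-]$ is just a cosmetic variant of the paper's direct use of $\nabla=[D^{1,0},-]$. The only caveat is the unresolved ``$\pm$'' signs (the correct sign in $[[D^{1,0},\debar],a]=[D^{1,0},[\debar,a]]+[\debar,[D^{1,0},a]]$ is $+$, since both entries have odd degree), but you flag this and land on the stated formulas.
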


\begin{proof} 	We have  
	\begin{align*}
	&[u,a]= [[D^{1,0}, \delta + \debar], a] = [D^{1,0}, [\delta + \debar, a]] + [\delta + \debar, [D^{1,0}, a]]= [\delta + \debar, \nabla ] (a),
	\end{align*}
while for the last equality, by  \eqref{rem.debar}, 
	\[ [ \nabla(\debar ),a] = [[D^{1,0}, \debar], a] =[D^{1,0}, [\debar, a]] + [\debar, [D^{1,0}, a]] = [D - \debar, \debar a]+ \debar [D^{1,0}, a] = \nabla (\debar a) + \debar \nabla (a). \] 

\end{proof}

	The trace operator $\Tr \colon \HOM^*_{\Oh_X}(\sE^*, \sE^*) \to \Oh_X$ can be extended to 
	\[ \Tr \colon A^{*,*}_X (\HOM^*_{\Oh_X}(\sE^*, \sE^*)) \to A^{*,*}_X, \quad \Tr (\omega \cdot f) = \omega \Tr (f). \] 
	If we consider the DG-Lie algebra structure on $A^{*,*}_X (\HOM^*_{\Oh_X}(\sE^*, \sE^*))$ given above, with differential $[\delta, -] + \debar$, and endow $A^{*,*}_X$ with trivial bracket and differential $\debar$, then the trace operator is a morphism of DG-Lie algebras, so that
	\[ \Tr ([\delta, a] + \debar a) = \debar \Tr (a).\]
	
	\begin{lemma}\label{lem.tracciaconn} In the above setup, for every 
$h\in A^{*,*}_X(\HOM^*_{\Oh_X}(\sE^*,\sE^*))$ we have $\Tr ([D,h])=d \Tr(h)$.
	\end{lemma}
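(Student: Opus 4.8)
The plan is to reduce the statement to the already-established compatibility between the trace and the differential $[\delta,-]+\debar$, exploiting the fact that $D = D^{1,0}+\debar$ and hence $[D,h] = \nabla(h) + \debar h$ by~\eqref{rem.debar}. Since $d = \de + \debar$ on $A^{*,*}_X$, it suffices to prove separately that $\Tr([D^{1,0},h]) = \de\,\Tr(h)$ and $\Tr(\debar h) = \debar\,\Tr(h)$. The second identity is immediate from the displayed property $\Tr([\delta,a]+\debar a) = \debar\,\Tr(a)$ (take $a = h$ and use that $\Tr$ vanishes on commutators of the associative composition product, so $\Tr([\delta,h]) = 0$, hence $\Tr(\debar h) = \debar\,\Tr(h)$; alternatively this follows directly from $\debar(\omega\cdot\Tr(f)) = \debar\omega\cdot\Tr(f) + (-1)^{\bar\omega}\omega\cdot\debar\Tr(f)$ and $\Tr(f)$ holomorphic, combined with~\eqref{rem.debar}). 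So the real content is the $(1,0)$-part.

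For the $(1,0)$-part, first I would reduce to a local computation: since each $\sE^i$ is locally free, around any point we may choose a holomorphic frame and write the connection $D^{1,0}$ in terms of a connection matrix $\theta$ of $(1,0)$-forms, acting on a local section $s$ (with values in $\sA^{*,*}_X(\sE^r)$) by $D^{1,0}(s) = \de s + \theta\cdot s$ in the usual matrix sense, extended with the Koszul sign rule. Writing $h$ locally as $\omega\cdot f$ with $f$ a matrix of holomorphic functions, one computes $[D^{1,0}, h] = \nabla(\omega\cdot f)$; by Lemma~\ref{lem.operatoreaggiunto} this again lies in $A^{*,*}_X(\HOM^*)$, and in the chosen frame it is represented by $\de\omega\cdot f + (-1)^{\bar\omega}\omega\cdot(\de f + [\theta,f])$, where $[\theta,f]$ is the graded matrix commutator. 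Taking the trace kills the commutator term because $\Tr$ is trace of matrices and $\Tr([\theta,f]) = 0$; what remains is $\de\omega\cdot\Tr(f) + (-1)^{\bar\omega}\omega\cdot\de\Tr(f) = \de(\omega\cdot\Tr(f)) = \de\,\Tr(\omega\cdot f)$, as desired. Since both sides of the claimed identity are globally defined, this local verification suffices.

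The main obstacle I anticipate is purely bookkeeping: getting the Koszul signs right in the identity $\nabla(\omega\cdot f) = \de\omega\cdot f + (-1)^{\bar\omega}\omega\cdot(\de f + [\theta,f])$, since $D^{1,0}$ raises the form-degree by one and the adjoint action $[D^{1,0},-]$ involves commuting $D^{1,0}$ past $\omega\cdot f$. A cleaner route, avoiding local frames altogether, is to observe that $h\mapsto \Tr([D,h]) - d\,\Tr(h)$ is $A^{*,*}_X$-linear in $h$ — this follows from property (1) of the connection together with Lemma~\ref{lem.operatoreaggiunto} and the Leibniz rule for $\Tr$ — and vanishes on $\HOM^*_{\Oh_X}(\sE^*,\sE^*)$ itself, because for a purely endomorphism-valued $h$ one has $[D,h] = [D^{1,0},h] + [\debar,h]$ with $[D^{1,0},h]$ the classical connection on $\End(\sE^*)$, whose trace is $\de\,\Tr(h)$ by the standard computation $\Tr(\de f + [\theta,f]) = \de\,\Tr(f)$, and $\Tr([\debar,h]) = \debar\,\Tr(h)$ by~\eqref{rem.debar}. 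An $A^{*,*}_X$-linear map vanishing on generators of $A^{*,*}_X(\HOM^*)$ over $A^{*,*}_X$ is identically zero, which finishes the proof. I would present this second argument as the main line, relegating the frame computation to a remark.
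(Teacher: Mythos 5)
Your proposal is correct and, at its core, is the same argument as the paper's: the paper also reduces by linearity to $h=\eta\cdot g$ with $g$ a degree-zero endomorphism, passes to a local holomorphic frame with connection matrix $(\omega_{ij})$, and concludes because the (super)trace kills the commutator term, exactly as in your $\Tr([\theta,f])=0$. Your preliminary splitting into $\de$- and $\debar$-parts and the $A^{*,*}_X$-linearity of the defect $h\mapsto \Tr([D,h])-d\Tr(h)$ are only a mild repackaging of the paper's direct computation of $\Tr((\nabla+\debar)(\eta\cdot g))$, so no genuinely new route is involved.
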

	
	\begin{proof} By linearity it is sufficient to consider the case 
		$h=\eta \cdot g$, with $ \eta \in \sA^{*,*}_X$ and $g \in \HOM^n_{\Oh_X}(\sE^*, \sE^*)$. 
It is clear that it is enough to consider $g$ of degree $0$, and by linearity we may assume  $g$ concentrated in one degree, i.e.,  $g = g_l \colon \sE^l \to \sE^l$. Let $e_1, \ldots, e_m$ be a local basis of holomorphic sections for $\sE^l$, and let
		\[ g (e_i) = \sum_j a_{ij} e_j, \quad D(e_i) = \sum_j \omega_{ij}  e_j, \quad \Tr (g)= (-1)^l \sum_i a_{ii}. \] 
		Then 
		\[ d \Tr (\eta \cdot g) = d (\eta \Tr (g)) =d\eta \Tr(g) + (-1)^{\overline{\eta}} \eta d\Tr(g), \]
		\begin{align*}
		& (\na  + \debar)( \eta \cdot g)(e_i) = D (\sum_j \eta a_{ij} e_j ) - (-1)^{\overline{\eta}} (\eta \cdot g) (\sum_j \omega_{ij}  e_j)\\
		&\quad=  \sum_j d(\eta) a_{ij} e_j   + \sum_j (-1)^{\overline{\eta}} \eta\wedge  d(a_{ij}) e_j  + \sum_{j}(-1)^{\overline{\eta}} \eta a_{ij} \wedge  D(e_j) - (-1)^{\overline{\eta}}\sum_{j} \eta \wedge \omega_{ij} g(e_j) \\
		&\quad  =\sum_k d(\eta) a_{ik} e_k   + \sum_k (-1)^{\overline{\eta}} \eta\wedge  d(a_{ik}) e_k   + \sum_{j,k}(-1)^{\overline{\eta}} \eta a_{ij} \wedge  \omega_{jk}e_k - (-1)^{\overline{\eta}}\sum_{j,k} \eta \wedge \omega_{ij} a_{jk}e_k\,.
		\end{align*}
		Therefore
		\begin{align*}
		\Tr ((\na  + \debar) (\eta \cdot g)) &= (-1)^l \sum_i  \left(d\eta a_{ii} + (-1)^{\overline{\eta}} \eta d(a_{ii}) + \sum_j \left( (-1)^{\overline{\eta}}\eta \wedge \omega_{ji}  a_{ij} - (-1)^{\overline{\eta}} \eta \wedge \omega_{ij} a_{ji}  \right) \right)\\
		& = (-1)^l \sum_i \left(d\eta a_{ii} + (-1)^{\overline{\eta}} \eta d(a_{ii})  \right) = d\eta \Tr(g)  + (-1)^{\overline{\eta}} \eta d \Tr(g). 
		\end{align*}
	\end{proof}

\begin{remark}\label{rem.nablade}
	It is useful to note that since the trace 
	is a morphism of differential graded Lie algebras,
	$ \Tr ([D, h]) = \Tr ((\na + \debar)(h))= d \Tr(h)$ of Lemma \ref{lem.tracciaconn} is equivalent to
	\[ \Tr (\nabla(h)) = \de \Tr (h). \] 
\end{remark}

\bigskip
\section{From connections and cyclic forms to $L_{\infty}$ morphisms}
\label{sec.Linfinito}

As in the previous section, let 
\[ \sE^*\colon\qquad 0\to \sE^p\xrightarrow{\,\delta\,}\sE^{p+1}\xrightarrow{\,\delta\,}\cdots\xrightarrow{\,\delta\,}\sE^{q}\to 0\]
be a fixed finite complex of locally free sheaves on a complex manifold $X$. 

\begin{definition} By a  cyclic (bilinear) form on the sheaf of DG-Lie algebras  
$\HOM^*_{\Oh_X}(\sE^*,\sE^*)$ we mean a 
graded symmetric $\Oh_X$-bilinear product of degree $0$
\[ \HOM^*_{\Oh_X}(\sE^*,\sE^*)\times \HOM^*_{\Oh_X}(\sE^*,\sE^*)\xrightarrow{\langle-,-\rangle}\Oh_X,\]
such that 
\[ \langle f,[g,h]\rangle=\langle [f,g],h\rangle\qquad \forall\; f,g,h\,.\]
\end{definition}

Equivalently, for every $f,g,h$ we have 
\[ \langle [f,g],h\rangle+(-1)^{\bar{f}\bar{g}}\langle g,[f,h]\rangle=0\]
i.e., $\langle -,-\rangle$ is invariant under the adjoint action. In particular
\begin{equation}\label{equ.deltaclosed} 
\langle [\delta,g],h\rangle+(-1)^{\bar{g}}\langle g,[\delta,h]\rangle=0\,.
\end{equation}
Notice that \eqref{equ.deltaclosed} is equivalent to the fact that the bilinear  form  
$\langle-,-\rangle$ is closed in the dual of $\HOM^*_{\Oh_X}(\sE^*,\sE^*)^{\odot 2}$.

Every cyclic form on $\HOM^*_{\Oh_X}(\sE^*,\sE^*)$  has a natural  extension
\[  A^{*,*}_X(\HOM^*_{\Oh_X}(\sE^*,\sE^*))^{\odot 2}\xrightarrow{\langle-,-\rangle}A^{*,*}_X,\qquad 
\langle\phi f,\psi g\rangle=(-1)^{\bar{f}\,\bar{\psi}}\phi\wedge \psi \langle f, g\rangle,\]
and it is immediate to check that, for $f,g \in A^{*,*}_X(\HOM^*_{\Oh_X}(\sE^*,\sE^*))$:  
\begin{equation} 
\debar \langle f,g \rangle =\langle \debar f, g \rangle + (-1)^{\overline{f} }\langle f, \debar g \rangle,
\end{equation}
and then $\langle-,-\rangle$ is $\debar+[\delta,-]$ closed.
Cyclic forms have received a lot of attention in several recent papers; for instance cyclic forms that are nondegenerate in cohomology play a central role in the  proof of the formality conjecture for polystable sheaves on projective surfaces with torsion canonical bundles, given in \cite{BMM2}. 

\begin{definition} We shall say that a connection $D$ of type $(1,0)$ on $\sE^*$ is compatible with the cyclic  form $ \langle -,- \rangle$ if 
\[ \langle [D,f],g\rangle+(-1)^{\bar{f}}\langle f,[D,g]\rangle=d\langle f,g\rangle\, ,\]
or equivalently if 
	\[ \langle\nabla (f),g\rangle+(-1)^{\bar{f}}\langle f,\nabla (g)\rangle=\de\langle f,g\rangle\, .\]
for every $f,g \in A^{*,*}_X(\HOM^*_{\Oh_X}(\sE^*,\sE^*))$.
\end{definition}

\begin{example}
According to  Lemma \ref{lem.tracciaconn} and Remark \ref{rem.nablade}, for every $a,b\in \C$ the form
\[\langle f,g\rangle=a\Tr(fg)+b\Tr(f)\Tr(g)\] 
is a cyclic form  of degree $0$ compatible with every connection of type (1,0).
\end{example}

We assume that the reader is familiar with the notion and basic properties of DG-Lie algebras and
$L_{\infty}$ morphisms between them, see e.g. \cite{BMM1,BMM2,fuka,getzler04,K,LMDT} and references therein. For the reader's convenience and to fix the sign convention, we only recall here the definition of an
$L_{\infty}$ morphism of DG-Lie algebras in the version that we use for 
explicit computations.

Let $V$ be a graded vector space over a field of characteristic 0. 
Given $v_1,\ldots,v_n$ 
homogeneous  vectors  of $V$ and a permutation $\sigma$ of $\{1,\ldots,n\}$, we denote by 
$\chi(\sigma;v_1,\ldots,v_n)=\pm 1$ the antisymmetric Koszul sign, defined by the relation 
\[ v_{\sigma(1)}\wedge\cdots\wedge v_{\sigma(n)}=\chi(\sigma;v_1,\ldots,v_n)\,  
 v_{1}\wedge\cdots\wedge v_{n}\,\]
in the $n$th exterior power $V^{\wedge n}$. We shall simply write $\chi(\sigma)$ instead of 
$\chi(\sigma;v_1,\ldots,v_n)$ when the vectors $v_1,\ldots,v_n$ are clear from the context.
For instance, if $\sigma$ is the transposition exchanging 1 and 2 we have 
$\chi(\sigma)=-(-1)^{\bar{v_1}\,\bar{v_2}}$. 
Notice that if every $v_i$ has odd degree, then 
$\chi(\sigma)=1$ for every $\sigma$.

Because of the universal property of wedge powers, we shall constantly interpret every linear map
$V^{\wedge p}\to W$ as a graded skew-symmetric $p$-linear map $V\times\cdots\times V\to W$.

\begin{definition}\label{def.morfismogeneral} 
Let $(V,\delta,[-,-])$ and  $(L,d,\{-,-\})$ be DG-Lie algebras over the same field. An 
$L_{\infty}$ morphism $g\colon V\rightsquigarrow L$ is a sequence of linear maps 
$g_n\colon V^{\wedge n}\to L$, $n\ge 1$, with $g_n$ of degree $1-n$ such that $g_1$ is a morphism of complexes, while for every 
$n\ge 2$ and every 
$v_1,\ldots,v_n\in V$ homogeneous we have
\[\begin{split}
\frac{1}{2}\sum_{p=1}^{n-1}&
\!\!\!\!\!\sum_{\quad\sigma\in S(p,n-p)}\!\!\!\!\!\!\chi(\sigma) (-1)^{(1-n+p)(|v_{\sigma(1)}|+\cdots+
|v_{\sigma(p)}|-p)}
\left\{g_p(v_{\sigma(1)},\ldots, v_{\sigma(p)}),\vphantom{\sum}
g_{n-p}(v_{\sigma(p+1)},\ldots,v_{\sigma(n)})\right\}\\
& +dg_n(v_1,\ldots,v_n)=
(-1)^{n-1}\sum_{\sigma\in S(1,n-1)}\chi(\sigma)g_{n}(\delta(v_{\sigma(1)}),v_{\sigma(2)},\ldots,v_{\sigma(n)})\\
&\qquad\qquad\qquad\qquad +(-1)^{n-2}\sum_{\sigma\in S(2,n-2)}\chi(\sigma)g_{n-1}([v_{\sigma(1)},v_{\sigma(2)}],v_{\sigma(3)},\ldots,v_{\sigma(n)}).\end{split}
\]
\end{definition}

Notice that the morphism of complexes $g_1$ factors to a morphism  $g_1\colon H^*(V)\to H^*(L)$, and the above condition for $n=2$, which is equivalent to   
\[ g_1([v_1,v_2])-
\{g_1(v_1),g_1(v_2)\}=dg_2(v_1,v_2)+g_2(\delta v_1,v_2)+(-1)^{\bar{v_1}}g_2(v_1,\delta v_2),\]
tells us that $g_1$ is a Lie morphism up to homotopy. In particular, the map $g_1\colon H^*(V)\to H^*(L)$
is a morphism of graded Lie algebras.

Conversely, given a morphism of graded Lie algebras $\tau \colon H^*(V)\to H^*(L)$ we shall say that an 
$L_{\infty}$ morphism $g\colon V\rightsquigarrow L$ lifts $\tau$ if  $g_1$ induces $\tau$ in cohomology.

In this paper we deal with $L_{\infty}$ morphisms where the target  $L$ is an abelian DG-Lie algebra: this means that $\{-,-\}=0$ and the above definition reduces to:

\begin{definition}\label{def.morfismo}
	Let $(V,\delta,[-,-])$ be a DG-Lie algebra and $(L,d)$ an abelian DG-Lie algebra. An $L_{\infty}$ morphism $g\colon V\rightsquigarrow L$ is a sequence of maps 
	$g_n\colon V^{\wedge n}\to L$, $n\ge 1$, with $g_n$ of degree $1-n$ such that the following conditions 
	$C_n$, $n=1,2,3,\ldots$, are satisfied:
	\begin{description}
	
	\item[$C_1$] $g_1\delta=d g_1$;\medskip
	
	\item[$C_n,\; n\ge 2$] for every 
	$v_1,\ldots,v_n\in V$ homogeneous we have
	\[ \begin{split}
	dg_n(v_1,\ldots,v_n)&=
	(-1)^{n-1}\sum_{\sigma\in S(1,n-1)}\chi(\sigma)g_{n}(\delta v_{\sigma(1)},v_{\sigma(2)},\ldots,v_{\sigma(n)})\\
	&\quad +(-1)^{n-2}\sum_{\sigma\in S(2,n-2)}\chi(\sigma)g_{n-1}([v_{\sigma(1)},v_{\sigma(2)}],v_{\sigma(3)},\ldots,v_{\sigma(n)}).\end{split}\]
	
	\end{description}
\end{definition}

Notice that if $g_n=0$ for every $n\ge N$ then $C_n$  is trivially satisfied for every 
$n>N$.
\medskip

We are now ready to prove the main result of this paper. 
In the following we consider 
 the shifted quotient $\dfrac{A^{*,*}_X}{A^{\ge 2,*}_X}[2]$ of the de Rham complex by the $2$nd subcomplex of the Hodge filtration as a DG-Lie algebra with  trivial bracket.

\begin{theorem}\label{thm.main1} Let $\sE^*$ be a finite complex of locally free sheaves on a complex manifold $X$ and let  $\langle-,-\rangle$ be a cyclic form of degree $0$ on $\HOM^*_{\Oh_X}(\sE^*,\sE^*)$ which is compatible with a connection $D$ of type $(1,0)$. Then 
	there is an $L_\infty$  morphism between DG-Lie algebras over the field $\C$
	\[ g\colon A^{0, *}_X(\HOM^*_{\Oh_X}(\sE^*,\sE^*))\rightsquigarrow \dfrac{A^{*,*}_X}{A^{\ge 2,*}_X}[2]\] 
	with components 
	\[\begin{split} g_1(f)&=\langle u,f\rangle=\langle f,u\rangle\in A^{1,*}_X[2],\\[4pt]
	g_2(f,g)&=\frac{1}{2}\left(\langle\nabla  (f),g\rangle-(-1)^{\overline{f}\overline{g}}
	\langle\nabla(g),f\rangle\right)\in A^{1,*}_X[2],\\[4pt]
	g_3(f,g,h)&=-\frac{1}{2}\langle f,[g,h]\rangle\in A^{0,*}_X[2],\end{split}\]
	and $g_n=0$ for every $n>3$.  As in the notation above, $u=\nabla (\debar + \delta)$ is the Atiyah cocycle of the connection $D$.
\end{theorem}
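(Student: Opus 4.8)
The plan is to verify directly that the stated sequence $(g_1,g_2,g_3,0,0,\dots)$ satisfies the conditions $C_1$, $C_2$, $C_3$ of Definition~\ref{def.morfismo}, and that $C_n$ holds trivially for $n\ge 4$. Throughout, the target is the abelian DG-Lie algebra $L=A^{*,*}_X/A^{\ge 2,*}_X[2]$ with differential induced by $d=\de+\debar$; crucially, since we quotient by $A^{\ge 2,*}_X$, every element of $A^{2,*}_X$ dies in $L$, so $\de$ kills $A^{1,*}_X$ inside $L$ and acts as $\debar$ on the relevant pieces. I will use freely the cyclicity of $\langle-,-\rangle$, its $\debar+[\delta,-]$-closedness, the compatibility identity $\langle\nabla f,g\rangle+(-1)^{\bar f}\langle f,\nabla g\rangle=\de\langle f,g\rangle$, and the formulas of Lemma~\ref{lem.dettagli}, especially $[\delta+\debar,\nabla](a)=[u,a]$ and $[\nabla(\debar),a]=\nabla(\debar a)+\debar\nabla(a)$, together with $[\debar+\delta,u]=0$.

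First, for $C_1$: $g_1(f)=\langle u,f\rangle$, and one computes $d\,g_1(f)=(\debar+\de)\langle u,f\rangle$. Using that $\langle-,-\rangle$ is $\debar+[\delta,-]$-closed and that $u$ is a cocycle ($[\debar+\delta,u]=0$), the term $\langle u,(\debar+[\delta,-])f\rangle$ appears; the piece $\langle u,[\delta,f]\rangle$ lands in $A^{2,*}_X$, hence vanishes in $L$, and similarly the $\de$-contribution $\langle\de u,f\rangle$ dies modulo $A^{\ge 2,*}$ because $u$ has bidegree components in $A^{1,1}\oplus A^{1,0}$ and $\de u$ lands in $A^{2,*}$. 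What survives is exactly $g_1$ applied to the differential $[\delta,-]+\debar$ of the source, giving $d\,g_1=g_1\circ(\text{source differential})$. Second, for $C_2$, I must check $d\,g_2(f,g)=g_2([\delta,-]f+\debar f,g)\pm g_2(f,[\delta,-]g+\debar g)-g_1([f,g])+g_1(f\cdot g\text{-type correction})$; more precisely the $C_2$ identity with $g_1$ a Lie-morphism-up-to-homotopy. Here the compatibility of $D$ with the cyclic form is the engine: expanding $d\langle\nabla f,g\rangle$ via compatibility produces $\langle\nabla^2 f,g\rangle$-type terms which, through Lemma~\ref{lem.dettagli}, rewrite as $\langle[u,f],g\rangle=\langle u,[f,g]\rangle$ (by cyclicity), matching $g_1([f,g])$, while the cross terms reassemble into $g_2$ of the differentials. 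Again all $A^{\ge 2,*}$ contributions are discarded.

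Third, for $C_3$, with $g_4=0$ the identity $C_3$ reads: $d\,g_3(f,g,h)$ equals an alternating sum of $g_3$ on differentials plus an alternating sum of $g_2$ on brackets $[f,g]$ etc. Since $g_3(f,g,h)=-\tfrac12\langle f,[g,h]\rangle\in A^{0,*}_X[2]$, its differential is $d=\debar$ on $A^{0,*}$, and the $\debar$-closedness identity for $\langle-,-\rangle$ together with the Jacobi identity in $\HOM^*_{\Oh_X}(\sE^*,\sE^*)$ and cyclicity should collapse the right-hand side; the $g_2$-on-brackets terms contribute $\tfrac12\langle\nabla f,[g,h]\rangle$-type expressions which, after applying compatibility and the derivation property of $\nabla$ over the bracket (i.e. $\nabla[g,h]=[\nabla g,h]+(-1)^{\bar g}[g,\nabla h]$, which follows since $[D,-]$ is a connection on $\HOM^*$), telescope against $\debar g_3$. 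Finally $C_n$ for $n\ge4$ is automatic since $g_n=0$ and $g_{n-1}=0$ there unless $n=4$; for $n=4$ one needs $0 = $ (sum of $g_3$-on-brackets terms), which is precisely the graded Jacobi identity $\sum_{\text{cyclic}}\pm\langle[f,g],[h,k]\rangle=0$ combined with cyclicity — this is a short independent check.

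\textbf{Main obstacle.} The hard part will be $C_3$ and the $n=4$ vanishing: managing the Koszul signs in the alternating sums over shuffles $S(2,2)$ and $S(1,3)$, and recognizing that the $g_2$-on-brackets contributions genuinely cancel against $\debar g_3$ rather than leaving a residual term. This requires the derivation property of $\nabla$ with respect to the Lie bracket (the ``adjoint connection'' statement only partially recorded in Lemma~\ref{lem.operatoreaggiunto}), the cyclicity in the sharp form $\langle\nabla f,[g,h]\rangle = \langle[\nabla f,g],h\rangle$, and careful bookkeeping that every term with a holomorphic-form degree $\ge 2$ is suppressed in the quotient. I would isolate these sign computations into a short lemma before attacking $C_3$, so the main proof stays readable.
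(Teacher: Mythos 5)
Your overall strategy --- direct verification of the conditions $C_1$ through $C_4$ using the compatibility of $D$ with the cyclic form, Lemma~\ref{lem.dettagli}, cyclicity and the Jacobi identity, with $C_4$ reducing to the graded Jacobi identity --- is exactly the paper's. However, two of the specific mechanisms you propose for discarding terms are wrong, and both would derail the computation. In $C_1$ you claim that $\langle u,[\delta,f]\rangle$ lands in $A^{2,*}_X$ and hence dies in the quotient. It does not: $\delta$ is the differential of the complex $\sE^*$, not the holomorphic de Rham differential, so $[\delta,f]$ has holomorphic form degree $0$ and $\langle u,[\delta,f]\rangle\in A^{1,*}_X$, which survives in $A^{*,*}_X/A^{\ge 2,*}_X$. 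This term must instead be handled by the $\ad$-invariance of the form, $\langle u,[\delta,f]\rangle=-\langle[\delta,u],f\rangle$, and then cancelled against the $-\langle\debar u,f\rangle$ coming from $\langle u,\debar f\rangle=\debar\langle u,f\rangle-\langle\debar u,f\rangle$, via the cocycle identity $[\delta,u]+\debar u=0$; you cite that identity but do not deploy it where it is actually needed.

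The second, more serious, problem is in $C_3$: you assert that the differential of $g_3(f,g,h)\in A^{0,*}_X[2]$ is just $\debar$ and that the $g_2$-on-brackets terms ``telescope against $\debar g_3$''. In the quotient $A^{*,*}_X/A^{\ge 2,*}_X$ the component $\de\colon A^{0,*}_X\to A^{1,*}_X$ is \emph{not} killed (only $\de$ acting on $A^{1,*}_X$ is), so $dg_3=\de g_3+\debar g_3$ has a nonzero $(1,*)$ part. The heart of the verification is precisely that the alternating sum of $g_2$ on brackets equals $-\tfrac12\de\langle a_1,[a_2,a_3]\rangle=\de g_3(a_1,a_2,a_3)$ --- obtained from the compatibility identity $\langle\na(f),g\rangle+(-1)^{\bar f}\langle f,\na(g)\rangle=\de\langle f,g\rangle$ together with the derivation property of $\na$ on brackets and cyclicity --- while the $g_3$-on-differentials terms supply $\debar g_3$. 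As written, your accounting leaves the entire $\de g_3$ contribution unexplained, or forces the $g_2$ terms to cancel, which they do not. Once these two points are corrected (and the sign bookkeeping you rightly flag is carried out), the computation goes through exactly as in the paper.
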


Notice  that the definition of $g$ only involves the DG-Lie structure of 
$A^{0, *}_X(\HOM^*_{\Oh_X}(\sE^*,\sE^*))$ and not the associative composition product.

\begin{proof}  Since the theorem gives  explicit formulas for the components $g_n$, the proof reduces to a straightforward computation. Since $g_n=0$ for every $n\ge 4$ 
	we need to check the conditions $C_n$ of Definition~\ref{def.morfismo} for $n=1,2,3,4$.
	For $C_1$ we have to prove  that
	\[  d g_1 (a) = g_1 ( [\delta, a] + \debar a ).\]
	This follows from the fact that  $[\delta, u] + \debar u =0 $ and that on the subcomplex $A^{1,*}_X \subseteq \dfrac{A^{*,*}_X}{A^{\ge 2,*}_X}$ we have $d= \debar$:
	\begin{align*}
	g_1([\delta, a] + \debar a) &=\langle u, [\delta, a] + \debar a \rangle =  -\langle [\delta, u],a \rangle +  \debar \langle u,a \rangle  -  \langle \debar u, a \rangle \\
	&  = -\langle [\delta, u] + \debar u, a \rangle + \debar \langle u,a \rangle = \debar \langle u,a \rangle = d g_1 (a).
	\end{align*}
	The condition $C_2$ is 
	\[ g_2 ([\delta, a_1] + \debar a_1, a_2) + (-1)^{\overline{a_1}} g_2 (a_1, [\delta, a_2] + \debar a_2) = g_1 ([a_1, a_2])-d g_2 (a_1, a_2).\]
	On the left hand side we have 
	\begin{align*}
	g_2 ([\delta, a_1] +& \debar a_1, a_2) + (-1)^{\overline{a_1}} g_2 (a_1, [\delta, a_2] + \debar a_2)  \\[3pt]
	& =\frac{1}{2} \Big( \langle \nabla ([\delta, a_1]), a_2 \rangle + \langle \nabla (\debar a_1), a_2 \rangle - (-1)^{\overline{a_1}\ \overline{a_2} + \overline{a_2}} \langle \nabla (a_2),  [\delta, a_1 ] \rangle  \\[3pt]
	&\quad - (-1)^{\overline{a_1}\ \overline{a_2} + \overline{a_2}} \langle \nabla (a_2),   \debar a_1 \rangle \Big) +
	\frac{1}{2} (-1)^{\overline{a_1}} \Big( \langle \nabla (a_1), [\delta, a_2]\rangle + \langle \nabla (a_1), \debar a_2\rangle \\[3pt]
	&\quad- (-1)^{\overline{a_1}\ \overline{a_2} + \overline{a_1}}  \langle \nabla ([\delta, a_2]), a_1 \rangle  - (-1)^{\overline{a_1}\ \overline{a_2} + \overline{a_1}}  \langle \nabla (\debar  a_2), a_1 \rangle \Big) \\[3pt]
	& =\frac{1}{2} \Big( \langle [\nabla (\delta), a_1]), a_2 \rangle - \langle [\delta, \nabla(a_1)]), a_2 \rangle + \langle \nabla (\debar a_1), a_2 \rangle - (-1)^{\overline{a_1}\ \overline{a_2} + \overline{a_2}} \langle \nabla (a_2),  [\delta, a_1 ] \rangle  \\[3pt]
	&\quad- (-1)^{\overline{a_1}\ \overline{a_2} + \overline{a_2}} \langle \nabla (a_2),   \debar a_1 \rangle  + (-1)^{\overline{a_1}} \langle \nabla (a_1), [\delta, a_2]\rangle + (-1)^{\overline{a_1}} \langle \nabla (a_1), \debar a_2\rangle  \\[3pt]
	&\quad- (-1)^{\overline{a_1}\ \overline{a_2}}  \langle [\nabla (\delta), a_2]), a_1 \rangle + (-1)^{\overline{a_1}\ \overline{a_2}}  \langle [\delta, \nabla(a_2)]), a_1 \rangle - (-1)^{\overline{a_1}\ \overline{a_2} }  \langle \nabla  (\debar  a_2), a_1 \rangle \Big)\\[3pt]
	& =\langle \nabla  (\delta), [a_1, a_2] \rangle + \frac{1}{2} \Big( \langle \nabla  (\debar a_1), a_2 \rangle - (-1)^{\overline{a_1}\ \overline{a_2} + \overline{a_2}} \langle \nabla (a_2),   \debar a_1 \rangle   +  (-1)^{\overline{a_1}} \langle \nabla (a_1), \debar a_2\rangle \\[3pt]
	&\quad - (-1)^{\overline{a_1}\ \overline{a_2} }  \langle \nabla (\debar  a_2), a_1 \rangle \Big). 
	\end{align*}
	
Using Lemma \ref{lem.dettagli},  the right hand side is:
	
	\begin{align*}
	g_1 ([a_1, a_2])& - d g_2 (a_1, a_2) = \langle \nabla (\delta) +\nabla (\debar), [a_1, a_2]  \rangle - \frac{1}{2} \Big( \langle \debar \nabla (a_1), a_2\rangle \\[3pt]
	&\quad - (-1)^{\overline{a_1}} \langle \nabla (a_1), \debar a_2 \rangle - (-1)^{\overline{a_1}\ \overline{a_2}} \langle \debar \nabla (a_2), a_1 \rangle + (-1)^{\overline{a_1}\ \overline{a_2} + \overline{a_2}} \langle \nabla  (a_2), \debar a_1\rangle \Big)  \\[3pt]
	& =\langle \nabla (\delta), [a_1, a_2]  \rangle - \frac{1}{2} \Big( \langle \debar \nabla (a_1), a_2\rangle - (-1)^{\overline{a_1}} \langle \nabla(a_1), \debar a_2 \rangle - (-1)^{\overline{a_1}\ \overline{a_2}} \langle \debar \nabla (a_2), a_1 \rangle \\[3pt]
	&\quad + (-1)^{\overline{a_1}\ \overline{a_2} + \overline{a_2}} \langle \na (a_2), \debar a_1\rangle \Big) + \frac{1}{2} \Big(\langle [\na(\debar),a_1], a_2 \rangle - (-1)^{\overline{a_1}\ \overline{a_2}}\langle [\na(\debar),a_2], a_1 \rangle\Big) \\[3pt]
	&= \langle \nabla (\delta), [a_1, a_2]  \rangle + \frac{1}{2} \Big( - \langle \debar \nabla (a_1), a_2\rangle + (-1)^{\overline{a_1}} \langle \nabla (a_1), \debar a_2 \rangle + (-1)^{\overline{a_1}\ \overline{a_2}} \langle \debar \nabla (a_2), a_1 \rangle \\[3pt]
	&\quad - (-1)^{\overline{a_1}\ \overline{a_2} + \overline{a_2}} \langle \na (a_2), \debar a_1\rangle + \langle \na(\debar a_1), a_2 \rangle - (-1)^{\overline{a_1}\ \overline{a_2}}\langle \na(\debar a_2), a_1 \rangle + \langle \debar \na (a_1), a_2\rangle \\[3pt]
	&\quad - (-1)^{\overline{a_1}\ \overline{a_2} }\langle \debar \na (a_2), a_1\rangle \Big)\\[3pt]
	& =\langle \nabla (\delta), [a_1, a_2]  \rangle + \frac{1}{2} \Big(  (-1)^{\overline{a_1}} \langle \nabla (a_1), \debar a_2 \rangle  - (-1)^{\overline{a_1}\ \overline{a_2} + \overline{a_2}} \langle \na (a_2), \debar a_1\rangle \\[3pt]
	&\quad  + \langle \na(\debar a_1), a_2 \rangle - (-1)^{\overline{a_1}\ \overline{a_2}}\langle \na(\debar a_2), a_1 \rangle \Big),
	\end{align*}
and this proves $C_2$. For $C_3$ we need to check that
	\begin{align*}
	dg_3 (a_1, a_2, a_3) &= g_3([\delta, a_1] + \debar a_1, a_2, a_3) - (-1)^{\overline{a_1}\ \overline{a_2}} g_3([\delta, a_2] + \debar a_2, a_1, a_3) \\
	& \quad + (-1)^{\overline{a_3}(\overline{a_1}+ \overline{a_2})} g_3 ([\delta, a_3] + \debar a_3, a_1, a_2) - g_2([a_1, a_2], a_3) + (-1)^{\overline{a_2}\ \overline{a_3}} g_2 ([a_1, a_3], a_2) \\
	&\quad - (-1)^{\overline{a_1} ( \overline{a_2}+ \overline{a_3})} g_2 ([a_2, a_3], a_1).
	\end{align*}

Using the compatibility of the connection and the cyclic form, the terms involving $g_2$ can be expanded as:
	\begin{align*}
	&- g_2([a_1, a_2], a_3) + (-1)^{\overline{a_2}\ \overline{a_3}} g_2 ([a_1, a_3], a_2)- (-1)^{\overline{a_1} ( \overline{a_2}+ \overline{a_3})} g_2 ([a_2, a_3], a_1) \\[3pt]
	&  =-\frac{1}{2} \big( \langle \nabla ([a_1, a_2]), a_3 \rangle - (-1)^{\overline{a_3}(\overline{a_1}+ \overline{a_2})} \langle \nabla (a_3), [a_1, a_2] \rangle \big) \\[3pt]
	&\quad + \frac{1}{2}(-1)^{\overline{a_2}\ \overline{a_3}} \big(\langle \nabla ([a_1, a_3]), a_2 \rangle  - (-1)^{\overline{a_2}(\overline{a_1}+ \overline{a_3})} \langle \nabla(a_2), [a_1, a_3] \rangle \big) \\[3pt]
	&\quad- \frac{1}{2} (-1)^{\overline{a_1}(\overline{a_2}+ \overline{a_3})} \big(\langle \nabla ([a_2, a_3]),a_1 \rangle  - (-1)^{\overline{a_1}(\overline{a_2}+ \overline{a_3})} \langle \nabla(a_1), [a_2, a_3] \rangle \big) \\[3pt]
	&= -\frac{1}{2} \Big( \langle [\nabla(a_1),a_2], a_3 \rangle + (-1)^{\overline{a_1}} \langle [a_1,\nabla(a_2)], a_3 \rangle -  (-1)^{\overline{a_3}(\overline{a_1}+ \overline{a_2})} \langle \nabla (a_3), [a_1, a_2] \rangle \\[3pt]
	&\quad - (-1)^{\overline{a_2}\ \overline{a_3}} \langle [\nabla(a_1), a_3], a_2 \rangle  - (-1)^{\overline{a_2}\ \overline{a_3} + \overline{a_1}} \langle [a_1, \nabla(a_3) ], a_2 \rangle \\[3pt]
	&\quad + (-1)^{\overline{a_1}\ \overline{a_2}} \langle \nabla(a_2), [a_1, a_3] \rangle + (-1)^{\overline{a_1}(\overline{a_2}+ \overline{a_3})} \langle [\nabla(a_2), a_3], a_1 \rangle  \\[3pt]
	&\quad + (-1)^{\overline{a_1}(\overline{a_2}+ \overline{a_3})+ \overline{a_2}} \langle [a_2, \nabla(a_3)], a_1 \rangle - \langle  \nabla (a_1) , [a_2, a_3] \rangle\Big) = -\frac{1}{2} \de \langle a_1, [a_2, a_3] \rangle.
	\end{align*}
	On the other hand,
	\begin{align*}
	& g_3([\delta, a_1] + \debar a_1, a_2, a_3) - (-1)^{\overline{a_1}\ \overline{a_2}} g_3([\delta, a_2] + \debar a_2, a_1, a_3)  + (-1)^{\overline{a_3}(\overline{a_1}+ \overline{a_2})} g_3 ([\delta, a_3] + \debar a_3, a_1, a_2) \\
	&\qquad= - \frac{1}{2} \big( \langle [\delta, a_1],[a_2, a_3] \rangle + (-1)^{\overline{a_1}} \langle a_1, [[\delta, a_2], a_3 ] \rangle + (-1)^{\overline{a_1} + \overline{a_2}} \langle a_1, [a_2, [\delta, a_3]] \rangle + \langle \debar a_1,[a_2, a_3] \rangle\\
	&\qquad\quad+ (-1)^{\overline{a_1}} \langle a_1, [\debar a_2, a_3 ] \rangle + (-1)^{\overline{a_1} + \overline{a_2}} \langle a_1, [a_2, \debar a_3] \rangle\big) =  - \frac{1}{2} \debar \langle a_1, [a_2, a_3] \rangle
	\end{align*}
	so that we obtain
	\[   dg_3(a_1,[a_2,a_3])=-\frac{1}{2} d \langle a_1, [a_2, a_3] \rangle =  -\frac{1}{2} \debar \langle a_1, [a_2, a_3] \rangle -\frac{1}{2} \de \langle a_1, [a_2, a_3] \rangle.\] 
	
	Lastly, the condition  $C_4$ is
	\begin{align*}
	&g_3 ([a_1, a_2], a_3, a_4) - (-1)^{\overline{a_2}\ \overline{a_3}} g_3 ([a_1, a_3], a_2, a_4 ) + (-1)^{\overline{a_4}(\overline{a_2} + \overline{a_3})} g_3 ([a_1, a_4], a_2, a_3)\\[3pt]
	&\qquad + (-1)^{\overline{a_1}(\overline{a_2} + \overline{a_3})} g_3 ([a_2, a_3], a_1, a_4) - (-1)^{\overline{a_3}\ \overline{a_4} + \overline{a_1}\ \overline{a_2} + \overline{a_1}\ \overline{a_4}} g_3 ([a_2, a_4], a_1, a_3)  \\[3pt]
	&\qquad +(-1)^{(\overline{a_1} + \overline{a_2})(\overline{a_3}+ \overline{a_4})}g_3 ([a_3, a_4], a_1, a_2)=0.
	\end{align*}
	We have that 
	\begin{align*}
	\frac{1}{2}&\langle [a_1, a_2], [a_3, a_4] \rangle - (-1)^{\overline{a_2}\ \overline{a_3}} \frac{1}{2}\langle [a_1, a_3], [a_2, a_4 ] \rangle  + (-1)^{\overline{a_4}(\overline{a_2} + \overline{a_3})} \frac{1}{2}\langle [a_1, a_4], [a_2, a_3] \rangle \\[3pt]
	&\quad + (-1)^{\overline{a_1}(\overline{a_2} + \overline{a_3})} \frac{1}{2}\langle [a_2, a_3], [a_1, a_4]\rangle  - (-1)^{\overline{a_3}\ \overline{a_4} + \overline{a_1}\ \overline{a_2} + \overline{a_1}\ \overline{a_4}} \frac{1}{2}\langle  [a_2, a_4], [a_1, a_3] \rangle   \\[3pt]
	&\quad +(-1)^{(\overline{a_1} + \overline{a_2})(\overline{a_3}+ \overline{a_4})}\frac{1}{2}\langle [a_3, a_4],[a_1, a_2]\rangle \\[3pt]
	&= \langle [a_1, a_2], [a_3, a_4] \rangle - (-1)^{\overline{a_2}\ \overline{a_3}} \langle [a_1, a_3], [a_2, a_4 ] \rangle + (-1)^{\overline{a_4}(\overline{a_2} + \overline{a_3})} \langle [a_1, a_4], [a_2, a_3] \rangle\\[3pt]
	&= \langle a_1, [a_2, [a_3, a_4]] \rangle - (-1)^{\overline{a_2}\ \overline{a_3}} \langle a_1, [a_3, [a_2, a_4 ]] \rangle + (-1)^{\overline{a_4}(\overline{a_2} + \overline{a_3})} \langle a_1, [a_4, [a_2, a_3]] \rangle \\[3pt]
	& = \langle a_1, [a_2, [a_3, a_4]] -(-1)^{\overline{a_2}\ \overline{a_3}} [a_3, [a_2, a_4 ]] -  [[a_2, a_3], a_4]\rangle =0\,.
	\end{align*}
\end{proof}

\begin{corollary}\label{cor.main1} 
Let $\sE^*$ be a finite complex of locally free sheaves on a complex manifold $X$. Then 
every connection $D$ of type $(1,0)$ on $\sE^*$ gives an 
$L_\infty$  morphism between DG-Lie algebras on the field $\C$
\[ g\colon A^{0, *}_X(\HOM^*_{\Oh_X}(\sE^*,\sE^*))\rightsquigarrow \dfrac{A^{*,*}_X}{A^{\ge 2,*}_X}[2]\] 
with components 
\[\begin{split} g_1(f)&=-\Tr(uf)\in A^{1,*}_X[2]\\[4pt]
	g_2(f,g)&=-\frac{1}{2}\Tr \left(\nabla(f)g-(-1)^{\overline{f}\overline{g}}
	\nabla(g)f\right)\in A^{1,*}_X[2]\\[4pt]
	g_3(f,g,h)&=\frac{1}{2}\Tr(f[g,h])\in A^{0,*}_X[2],\end{split}\]
	and $g_n=0$ for every $n>3$.
\end{corollary}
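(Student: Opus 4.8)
The plan is to deduce the corollary from Theorem~\ref{thm.main1} by specializing to a suitable cyclic form. I would take $\langle f,g\rangle=-\Tr(fg)$, that is, the case $a=-1$, $b=0$ of the Example following Lemma~\ref{lem.tracciaconn}. The first step is to record why this is a cyclic form of degree $0$ on $\HOM^*_{\Oh_X}(\sE^*,\sE^*)$ compatible with the connection $D$. Graded symmetry and adjoint invariance are immediate from the graded cyclicity of the trace of $\Oh_X$-linear endomorphisms. For compatibility, I would use that $\nabla=[D^{1,0},-]$ is a graded derivation of the composition product, so $\nabla(fg)=\nabla(f)g+(-1)^{\bar{f}}f\nabla(g)$, and then Remark~\ref{rem.nablade} (equivalently Lemma~\ref{lem.tracciaconn}) gives $\Tr(\nabla(f)g)+(-1)^{\bar{f}}\Tr(f\nabla(g))=\Tr(\nabla(fg))=\de\Tr(fg)$, which is exactly the required identity $\langle\nabla(f),g\rangle+(-1)^{\bar{f}}\langle f,\nabla(g)\rangle=\de\langle f,g\rangle$.

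Once this is in place, Theorem~\ref{thm.main1} produces the $L_\infty$ morphism $g$ for free, and the only remaining task is to substitute $\langle f,g\rangle=-\Tr(fg)$ into its three components and simplify: $g_1(f)=\langle u,f\rangle=-\Tr(uf)$; $g_2(f,g)=\tfrac12\bigl(\langle\nabla(f),g\rangle-(-1)^{\bar{f}\bar{g}}\langle\nabla(g),f\rangle\bigr)=-\tfrac12\Tr\bigl(\nabla(f)g-(-1)^{\bar{f}\bar{g}}\nabla(g)f\bigr)$; and $g_3(f,g,h)=-\tfrac12\langle f,[g,h]\rangle=\tfrac12\Tr(f[g,h])$, while $g_n=0$ for $n>3$. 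The stated bidegrees ($g_1,g_2$ into $A^{1,*}_X[2]$ and $g_3$ into $A^{0,*}_X[2]$) follow from $u\in A^{1,1}_X(\HOM^0_{\Oh_X}(\sE^*,\sE^*))\oplus A^{1,0}_X(\HOM^1_{\Oh_X}(\sE^*,\sE^*))$ together with the fact from Lemma~\ref{lem.operatoreaggiunto} that $\nabla$ raises the holomorphic form degree by one.

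I do not expect any real obstacle, since all the identities $C_n$ of Definition~\ref{def.morfismo} were already checked in complete generality in the proof of Theorem~\ref{thm.main1}; the one point that must be handled carefully, and which I would state explicitly, is the overall normalization: one has to use $\langle f,g\rangle=-\Tr(fg)$ rather than $+\Tr(fg)$ in order to reproduce precisely the signs appearing in the displayed formulas for $g_1$, $g_2$ and $g_3$.
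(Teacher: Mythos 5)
Your proposal is correct and coincides with the paper's own proof, which simply applies Theorem~\ref{thm.main1} to the cyclic form $\langle f,g\rangle=-\Tr(fg)$ (the case $a=-1$, $b=0$ of the Example following Lemma~\ref{lem.tracciaconn}). The extra details you supply on cyclicity, compatibility via Remark~\ref{rem.nablade}, and the sign bookkeeping in the substitution are all accurate.
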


\begin{proof} Use the cyclic form  $\langle f,g\rangle=-\Tr(fg)$ in Theorem~\ref{thm.main1}.\end{proof}

\begin{remark}
The homotopy class of the $L_{\infty}$ morphism $g$ of  Corollary~\ref{cor.main1} depends on the choice of the connection. This also holds for complex analytic  connections, that is for connections where the Atiyah cocycle vanishes $u=0$ and therefore $g_1=0$. This implies that 
$g_2$ factors to a bilinear graded skewsymmetric map  in cohomology 
\[ g_2\colon \Ext^i_X(\sE^*,\sE^*)\times \Ext^j_X(\sE^*,\sE^*)\to \mathbb{H}^{i+j+1}
\left(\dfrac{A^{*,*}_X}{A^{\ge 2,*}_X}\right)\]
that depends only on the homotopy class of $g$.

In order to see that the above maps depend on the connection it is sufficient to consider the example of a trivial bundle of rank $2$ over an elliptic curve $X$.
In this case, since $\Omega^1_X$ is trivial, every  complex analytic connection is of type $D=d+\theta$, where 
$\theta$ is a $2\times 2$ matrix with values in $H^0(X,\Omega_X^1)$ and then 
$\nabla=\debar+[\theta,-]$. Similarly $\Ext^0_X(\sE^*,\sE^*)$ is identified with the Lie algebra $M_{2,2}(\C)$ of 
$2\times 2$ matrices with constant  coefficients and therefore 
\[ g_2(a,b)=-\frac{1}{2}([\theta,a]b-[\theta,b]a)\in H^0(\Omega^1_X)=\mathbb{H}^{1}
\left(\dfrac{A^{*,*}_X}{A^{\ge 2,*}_X}\right)\,.\]
If $dz$ is a generator of $H^0(X,\Omega^1_X)$ and $\theta=Cdz$, with $C\in M_{2,2}(\C)$, 
the conclusion follows by observing that the rank of the bilinear map 
\[ M_{2,2}(\C)\times M_{2,2}(\C)\to \C,\quad (A,B)\mapsto \frac{1}{2}\Tr([C,A]B-[C,B]A)=\Tr(C[A,B]),\]
is equal to $0$  when $C$ is a multiple of the identity and  is  2 otherwise.
\end{remark}

\bigskip
\section{Semiregularity and deformations of coherent sheaves}

Let $\sF$ be a coherent sheaf on a complex manifold $X$ equipped with a finite  locally free resolution 
\begin{equation} \label{equ.resolution}
0\to \sE^{-n}\xrightarrow{\,\delta\,} \cdots\xrightarrow{\,\delta\,} \sE^0\to \sF\to 0\,.
\end{equation}

Then the deformation theory of $\sF$ is controlled by the DG-Lie algebra 
$A_X^{0,*}(\HOM^*_{\Oh_X}(\sE^*,\sE^*))$ defined in the previous sections for arbitrary finite complexes of locally free sheaves. This means that, over a local Artin $\C$-algebra $A$, the deformations of $\sF$ over $A$ are determined by solutions of the Maurer-Cartan equations 
\[ \debar x+[\delta,x]+\frac{1}{2}[x,x]=0,\qquad x\in 
\bigoplus_{i\ge 0} A_X^{0,i}(\HOM^{1-i}_{\Oh_X}(\sE^*,\sE^*)\otimes \mathfrak{m}_A,\]
with $\mathfrak{m}_A$ the maximal ideal of $A$. Moreover two solutions of the Maurer-Cartan equation give isomorphic deformations if and only if they are gauge equivalent: when $\sF$ is locally free this fact is nowadays standard \cite{fuka,LMDT} and extends quite easily to the general case, see e.g. \cite{BMM2,FIM,Meazz}.

The resolution \eqref{equ.resolution} can also be used for an explicit description of the (modified)  semiregularity map 
\[  \tau=\sum_{p\ge 0}\tau_p\colon 
\Ext_X^*(\sF,\sF)\to \bigoplus_{p\ge 0}\mathbb{H}^{*}(X,\Omega_X^{\le p}[2p]),\qquad \tau(a)=\Tr(\exp(-\At(\sF))\circ a)\,.\]

Since the quasi-isomorphic complexes $\sE^*$ and $\sF$ have the same Atiyah class we have 
$\At(\sF)=[u]$, where $u\in A_X^{1,*}(\HOM^*_{\Oh_X}(\sE^*,\sE^*))$ is the Atiyah cocycle of a connection of type $(1,0)$ on $\sE^*$. Moreover
the hypercohomology of $\Omega_X^{\le p}[2p]$ is computed by the truncated de Rham complex $A_X^{*,*}/A_X^{>p,*}[2p]$, and then every component $\tau_p$ is induced in cohomology by the morphism of complexes
\[ A_X^{0,*}(\HOM^*_{\Oh_X}(\sE^*,\sE^*))\to 
\frac{A_X^{*,*}}{A_X^{>p,*}}[2p],\qquad f\mapsto \frac{(-1)^p}{p!}\Tr(u^pf)\,.\]

Then, Corollary~\ref{cor.main1} immediately gives the following theorem.

\begin{corollary}\label{cor.thm.main2} Let $\sF$ be a coherent sheaf on a complex manifold $X$ equipped with a finite locally free resolution $\sE^*$. Then every connection of type $(1,0)$ on the resolution $\sE^*$ gives a lifting of 
\[  \tau_1\colon \Ext_X^*(\sF,\sF)\to \mathbb{H}^{*}(X,\Omega_X^{\le 1}[2])\]
to an $L_{\infty}$ morphism 
\[ g\colon A_X^{0,*}(\HOM^*_{\Oh_X}(\sE^*,\sE^*))\rightsquigarrow \frac{A_X^{*,*}}{A_X^{>1,*}}[2]\,.\]
\end{corollary}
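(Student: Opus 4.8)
The plan is to deduce Corollary~\ref{cor.thm.main2} directly from Corollary~\ref{cor.main1} applied to the finite complex of locally free sheaves $\sE^*$ furnished by the resolution \eqref{equ.resolution}. First I would invoke Corollary~\ref{cor.main1}: a choice of connection $D$ of type $(1,0)$ on $\sE^*$ (equivalently, on each $\sE^i$) produces an $L_{\infty}$ morphism
\[ g\colon A_X^{0,*}(\HOM^*_{\Oh_X}(\sE^*,\sE^*))\rightsquigarrow \frac{A_X^{*,*}}{A_X^{\ge 2,*}}[2], \]
with explicit components $g_1,g_2,g_3$ and $g_n=0$ for $n>3$. Since $A_X^{\ge 2,*}=A_X^{>1,*}$, the target is exactly the complex appearing in the statement, so no further adjustment of the codomain is needed.

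Next I would verify that $g$ is a lifting of $\tau_1$ in the sense fixed in the introduction, namely that $g_1$ is a morphism of complexes inducing $\tau_1$ in cohomology. That $g_1$ is a morphism of complexes is part of Corollary~\ref{cor.main1} (it is condition $C_1$). For the cohomological identification, I would use the discussion immediately preceding the corollary: $\tau_1$ is induced by the morphism of complexes $f\mapsto \tfrac{(-1)^1}{1!}\Tr(u^1 f)=-\Tr(uf)$ from $A_X^{0,*}(\HOM^*_{\Oh_X}(\sE^*,\sE^*))$ to $A_X^{*,*}/A_X^{>1,*}[2]$, where $u$ is the Atiyah cocycle of the connection and $\At(\sF)=\At(\sE^*)=[u]$ because $\sE^*$ and $\sF$ are quasi-isomorphic and the Atiyah class is a homotopy invariant (as recalled after Definition~\ref{def.atiyahclass}). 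Since $g_1(f)=-\Tr(uf)$ is literally this same map, $g_1$ induces $\tau_1$ on cohomology, which is precisely the required lifting property.

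Finally I would note that the deformation-theoretic statement that $A_X^{0,*}(\HOM^*_{\Oh_X}(\sE^*,\sE^*))$ controls the deformations of $\sF$ — recalled at the start of this section, with Maurer-Cartan solutions modulo gauge — makes the domain of $g$ the correct DG-Lie algebra, so the morphism $g$ is genuinely a morphism of deformation theories lifting $\tau_1$. No real obstacle arises here: the corollary is essentially a translation of Corollary~\ref{cor.main1} into the language of coherent sheaves, the only points requiring care being the identification $A_X^{\ge 2,*}=A_X^{>1,*}$ and the invariance $\At(\sF)=\At(\sE^*)$, both of which are already established in the text.
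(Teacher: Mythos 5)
Your proposal is correct and follows exactly the paper's argument: the paper likewise derives this corollary immediately from Corollary~\ref{cor.main1}, using the preceding observation that $\tau_1$ is induced by the morphism of complexes $f\mapsto -\Tr(uf)$ (with $\At(\sF)=[u]$ since $\sE^*$ and $\sF$ have the same Atiyah class), which coincides with the linear component $g_1$. The only points you flag for care (the identification $A_X^{\ge 2,*}=A_X^{>1,*}$ and the homotopy invariance of the Atiyah class) are indeed the ones the paper relies on implicitly.
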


Now the application to deformation theory of Corollary~\ref{cor.thm.main2} follows by a completely standard argument, the same used and explained, for instance, in \cite{algebraicBTT,K,EDF,ManRendiconti,LMDT}: every $L_{\infty}$ morphism  
$g\colon V\rightsquigarrow L$  of DG-Lie algebras induces a morphism of deformation functors $\Def_V\to \Def_L$ (here $\Def$ means the functor of Maurer-Cartan solution modulus gauge action) such that 
the induced map in cohomology  commutes with obstruction maps. If $L$ is abelian, then every obstruction in $\Def_L$ is trivial, hence every obstruction in $\Def_V$ belongs to the kernel of $g_1\colon H^2(V)\to H^2(L)$.

\begin{corollary}\label{cor.main2} Let $\sF$ be a coherent sheaf on a complex manifold $X$ admitting a locally free resolution. Then every obstruction to the deformations of $\sF$ belongs to the kernel of the map
\[  \tau_1\colon \Ext_X^2(\sF,\sF)\to \mathbb{H}^{2}(X,\Omega_X^{\le 1}[2]).\]
If the Hodge to de Rham spectral sequence of $X$ degenerates at $E_1$, then  
every obstruction to the deformations of $\sF$ belongs to the kernel of the map
\[  \sigma_1\colon \Ext_X^2(\sF,\sF)\to {H}^{3}(X,\Omega_X^{1}),\qquad \sigma_1(a)=-\Tr(\At(\sF)\circ a).\]
\end{corollary}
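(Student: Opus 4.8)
The plan is to deduce the statement from Corollary~\ref{cor.thm.main2} together with the standard dictionary between $L_{\infty}$ morphisms of DG-Lie algebras and obstruction theory, plus the factorisation of $\tau_1$ recalled in the Introduction.

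First I would reduce to the case of a \emph{finite} resolution. Since $X$ is smooth, Hilbert's syzygy theorem (see \cite[V.3.11]{Ko}) guarantees that a coherent sheaf admitting a locally free resolution admits a bounded one $\sE^*$ as in \eqref{equ.resolution}. Fixing such a resolution and an arbitrary connection of type $(1,0)$ on it, Corollary~\ref{cor.thm.main2} furnishes an $L_{\infty}$ morphism
\[ g\colon A_X^{0,*}(\HOM^*_{\Oh_X}(\sE^*,\sE^*))\rightsquigarrow \frac{A_X^{*,*}}{A_X^{>1,*}}[2] \]
whose linear component $g_1$ induces $\tau_1$ in cohomology, and whose target is an abelian DG-Lie algebra.

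Next I would invoke the deformation-theoretic input. The DG-Lie algebra $V:=A_X^{0,*}(\HOM^*_{\Oh_X}(\sE^*,\sE^*))$ controls the deformations of $\sF$ (standard for $\sF$ locally free and extending to the general case, see \cite{BMM2,FIM,Meazz}), so $H^2(V)=\Ext^2_X(\sF,\sF)$ is its obstruction space. By the functorial properties of $L_{\infty}$ morphisms used and explained in \cite{algebraicBTT,K,EDF,ManRendiconti,LMDT}, the morphism $g$ induces a morphism of deformation functors $\Def_V\to\Def_L$, where $L:=\tfrac{A_X^{*,*}}{A_X^{>1,*}}[2]$, and the induced map $g_1\colon H^2(V)\to H^2(L)$ commutes with the obstruction maps. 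Since $L$ has trivial bracket its obstruction map vanishes identically, hence every obstruction to deforming $\sF$ lies in $\ker(g_1)=\ker\tau_1$; this is the first assertion.

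Finally, for the last assertion I would use the factorisation $\tau_1=i_1\circ\sigma_1$, where $i_1\colon H^3(X,\Omega^1_X)=H^2(X,\Omega^1_X[1])\to\mathbb{H}^2(X,\Omega_X^{\le 1}[2])$ is induced by the inclusion of complexes $\Omega^1_X[1]\subset\Omega_X^{\le 1}[2]$. When the Hodge-to-de Rham spectral sequence of $X$ degenerates at $E_1$, the map $i_1$ is injective, so $\ker\tau_1=\ker\sigma_1$ and the second assertion follows from the first. I expect no serious obstacle here: the whole argument is a packaging of Corollary~\ref{cor.thm.main2} with well-documented facts, the only point that is cited rather than proved being that $V$ genuinely governs the deformation functor of $\sF$ in the not-necessarily-locally-free situation.
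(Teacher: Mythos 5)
Your proposal is correct and follows essentially the same route as the paper: reduction to a finite resolution via the syzygy theorem, application of Corollary~\ref{cor.thm.main2}, the standard fact that an $L_{\infty}$ morphism into an abelian DG-Lie algebra forces obstructions into the kernel of its linear component, and the identification $\ker\tau_1=\ker\sigma_1$ under $E_1$-degeneration. No gaps.
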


\begin{proof} By the syzygy theorem it is not restrictive to assume that $\sF$ admits a finite locally free resolution
$\sE^*$.
According to  Corollary~\ref{cor.thm.main2} the map $\tau_1$ lifts to 
an $L_{\infty}$ morphism 
\[ g\colon A_X^{0,*}(\HOM^*_{\Oh_X}(\sE^*,\sE^*))\rightsquigarrow \frac{A_X^{*,*}}{A_X^{\ge 2,*}}[2]\,\]
and we have that the linear component $g_1$ commutes with obstruction maps of the associated deformation functors. 
By construction the DG-Lie algebra 
$\dfrac{A_X^{*,*}}{A_X^{\ge 2,*}}[2]$ has trivial bracket and hence every obstruction of the associated deformation functor is trivial.

If the Hodge to de Rham spectral sequence of $X$ degenerates at $E_1$ then the inclusion 
of complexes $A^{1,*}_X[2]\subset \dfrac{A_X^{*,*}}{A_X^{\ge 2,*}}[2]$ is injective in cohomology
\[ H^3(X,\Omega^1_X)\hookrightarrow \mathbb{H}^{2}(X,\Omega_X^{\le 1}[2])\]
and the maps $\sigma,\tau$ have the same kernel.
\end{proof}

\bigskip
\section{Outline of the analogous algebraic construction}
\label{sec.outline}

Since the proof of the main theorem is mostly algebraic, it is not surprising the same can be slightly modified in order to have an algebraic analogue, valid on a smooth separated scheme $X$ of finite type over a field $\K$ of characteristic 0,  provided that an algebraic representative for $R\Hom(\sF,\sF)$ is given. Here we give only a sketch of a possible algebraic proof,  a more detailed description is given in the paper by the first author \cite{lepri}.

Let $\sE^*$ be a finite complex of locally free sheaves on $X$ and let $\sU=\{U_i\}$ be an open 
affine cover of $X$. For simplicity of exposition we assume that $\sE^*$ is a complex of free sheaves on every $U_i$, although this additional assumption is unnecessary and it can be easily removed. 

Thus, according to \cite{FIM}, a possible DG-Lie algebra representing  $R\Hom(\sF,\sF)$ is 
the totalization $\Tot(\sU,\HOM^*_{\Oh_X}(\sE^*,\sE^*))$
of the cosimplicial DG-Lie algebra of \v{C}ech cochains in $\HOM^*_{\Oh_X}(\sE^*,\sE^*)$ with respect to the open cover $\sU$, see also \cite{Meazz}: here we follow the notation of \cite{DMcoppie,LMDT}, where the reader can also find a complete and explicit definition of the totalization functor 
\[ \Tot\colon\{\text{cosimplicial DG-vector spaces}\}\to 
\{\text{DG-vector spaces}\}\,\]
together with its main properties. Here we only recall that $\Tot$ preserves possible multiplicative structures, hence transforms cosimplicial DG-Lie algebras (resp.: cosimplicial abelian DG-Lie algebras) into 
DG-Lie algebras (resp.: abelian DG-Lie algebras).

Denote by $\Omega=\Omega_{X/\K}$ the sheaf of K\"{a}hler differentials.
For every coherent sheaf $\sM$ we denote by $\DER_{\K}(\Oh_X,\sM)\simeq\HOM_{\Oh_X}(\Omega,\sM)$ the sheaf of  $\K$-linear 
derivations $\Oh_X\to \sM$, considered as a complex concentrated in degree $0$. 
Following \cite{DMcoppie} we define the graded sheaf 
\[ \sJ^*_{\sM}=\{(f,\alpha)\in \HOM^*_{\K}(\sE^*,\sM\otimes \sE^*)\times \DER_{\K}(\Oh_X,\sM)\mid 
f(ax)=af(x)+\alpha(a)\otimes x,\; \forall x\in \sE^*,\, a\in \Oh_X\}.\]
The same argument of \cite{DMcoppie} shows that $\sJ^*_{\sM}$ is a finite complex of coherent sheaves and there exists a short exact sequence of  complexes 
\[ 0\to  \HOM^*_{\Oh_X}(\sE^*,\sM\otimes_{\Oh_X} \sE^*)\xrightarrow{f\mapsto (f,0)} \sJ^*_{\sM}\xrightarrow{(f,\alpha)\mapsto\alpha} 
\DER_{\K}(\Oh_X,\sM)\to 0\,.\]

Then,  (a germ of) an algebraic connection on $\sE^*$ may be conveniently defined as an element of $\sJ^0_{\Omega}$  mapped onto the  universal derivation $d\colon \Oh_X\to \Omega$. 
Clearly, since the map  $\sJ^0_{\Omega}\to \DER_{\K}(\Oh_X,\Omega)$ is generally not surjective on global sections, a global algebraic connection on $\sE^*$ does not necessarily exist. 

However, a global algebraic connection always exists in the totalization of $\sJ^*_{\Omega}$ with respect to the affine open cover $\sU$. 
In fact, the exact sequence of coherent sheaves 
\[ 0\to  \HOM^*_{\Oh_X}(\sE^*,\Omega\otimes_{\Oh_X} \sE^*)\xrightarrow{f\mapsto (f,0)} 
\sJ^*_{\Omega}\xrightarrow{(f,\alpha)\mapsto\alpha} 
\DER_{\K}(\Oh_X,\Omega)\to 0\,\]
gives a short exact sequence of the corresponding cosimplicial complexes of \v{C}ech cochains in the open affine cover $\sU$; since $\Tot$ is an exact functor (see e.g. \cite{FHT,LMDT}) we get  an exact sequence 
\[ 0\to  \Tot(\sU,\HOM^*_{\Oh_X}(\sE^*,\Omega\otimes_{\Oh_X} \sE^*))\xrightarrow{\quad} 
\Tot(\sU,\mathcal{J}_{\Omega}^*)\xrightarrow{\quad } 
\Tot(\sU,\DER_{\K}(\Oh_X,\Omega))\to 0\,.\]
In view of the natural inclusion of  global sections into the totalization, the universal derivation $d\colon \Oh_X\to \Omega$ belong to  $\Tot(\sU,\DER_{\K}(\Oh_X,\Omega))$ and we may define a \emph{connection of type $(1,0)$ on $\sE^*$} as   an element 
$D\in \Tot(\sU,\mathcal{J}^*_{\Omega})$ mapped onto $d$.

Now everything works, mutatis mutandis, as in the previous sections: consider the complex 
$\Oh_X[2]\xrightarrow{d}\Omega[1]$ as a sheaf of abelian DG-Lie algebras and
define the Atiyah cocycle 
\[u\in \Tot(\sU,\HOM^*_{\Oh_X}(\sE^*,\Omega\otimes_{\Oh_X} \sE^*))\] 
 as the differential of $D$. Denote 
by \[\nabla=[D,-]\colon \Tot(\sU,\HOM^*_{\Oh_X}(\sE^*,\sE^*))\to \Tot(\sU,\HOM^*_{\Oh_X}(\sE^*,\Omega\otimes_{\Oh_X} \sE^*))\] 
the adjoint of $D$ and use the same formulas of Theorem~\ref{thm.main1} in order to define 
an $L_{\infty}$ morphism 
\[ g\colon \Tot(\sU,\HOM^*_{\Oh_X}(\sE^*,\sE^*))\rightsquigarrow 
\Tot(\sU,\Oh_X[2]\xrightarrow{d}\Omega[1])\,. \]
Finally, by Whitney's integration theorem \cite{getzler04,LMDT} the cohomology 
of  $\Tot(\sU,\Oh_X[2]\xrightarrow{d}\Omega[1])$ is the same as the hypercohomology of 
$\Oh_X[2]\xrightarrow{d}\Omega[1]$ and 
the linear component $g_1$ induces in cohomology the first component $\tau_1$ of the (modified) semiregularity map.

\begin{ackno} We thank Francesco Meazzini and Ruggero Bandiera for useful discussions on the subject of this paper. Our thanks also to the anonymous referee for several useful remarks and comments. 
\end{ackno}

\end{document}